\newtheorem{theorem}{Theorem}
\newtheorem{corollary}[theorem]{Corollary}
\newtheorem{lemma}[theorem]{Lemma}
\newtheorem{proposition}[theorem]{Proposition}
\newtheorem{claim1}{Claim}
\newtheorem{claim2}{Claim}
\newtheorem{conjecture}[theorem]{Conjecture}
\begin{document}
\newcommand{\Addresses}{{
\bigskip
\footnotesize
\medskip

\noindent Maria-Romina~Ivan, \textsc{Department of Pure Mathematics and Mathematical Statistics, Centre for Mathematical Sciences, Wilberforce Road, Cambridge, CB3 0WB, United UK.}\par\noindent\nopagebreak\textit{Email addresses: }\texttt{mri25@dpmms.cam.ac.uk}

\medskip

\noindent Sean~Jaffe, \textsc{Department of Pure Mathematics and Mathematical Statistics, Centre for Mathematical Sciences, Wilberforce Road, Cambridge, CB3 0WB, United UK.}\par\noindent\nopagebreak\textit{Email address: }\texttt{scj47@cam.ac.uk}}}

\pagestyle{fancy}
\fancyhf{}
\fancyhead [LE, RO] {\thepage}
\fancyhead [CE] {MARIA-ROMINA IVAN AND SEAN JAFFE}
\fancyhead [CO] {GLUING POSETS AND THE DICHOTOMY OF POSET SATURATION NUMBERS}
\renewcommand{\headrulewidth}{0pt}
\renewcommand{\l}{\rule{6em}{1pt}\ }
\title{\Large{\textbf{GLUING POSETS AND THE DICHOTOMY OF POSET SATURATION NUMBERS}}}
\author{MARIA-ROMINA IVAN AND SEAN JAFFE}
\date{}
\maketitle
\begin{abstract}
Given a finite poset $\mathcal P$, we say that a family $\mathcal F$ of subsets of $[n]$ is $\mathcal P$-saturated if $\mathcal F$ does not contain an induced copy of $\mathcal P$, but adding any other set to $\mathcal F$ creates an induced copy of $\mathcal P$. The saturation number of $\mathcal P$ is the size of the smallest $\mathcal P$-saturated family with ground set $[n]$. The saturation number for posets is known to exhibit a dichotomy: it is either bounded or it has at least $\sqrt n$ rate of growth. 
Determining which posets have bounded saturation number is a major open problem.

In this paper we consider a `gluing' operation, formed from two finite posets $\mathcal P$ and $\mathcal Q$ by setting all elements of $\mathcal P$ to be below all elements of $\mathcal Q$. We show that (under some mild assumptions) this
operation preserves bounded and unbounded saturation number. This is the first such `new from old' poset construction to be found.
As an application, we show that for any poset $\mathcal P$ one may add at most 3 elements to $\mathcal P$ to obtain a poset whose saturation number growth is at most linear: this may be viewed as a step towards the
other major open problem in the area, namely the conjecture that every finite poset has this growth at most linear.

We also consider the poset equivalent of weak saturation for
graphs: for each finite poset $\mathcal P$, we determine exactly the minimum size of a percolating family for $\mathcal P$.
\end{abstract}
\section{Introduction}
We say that a poset $(\mathcal Q,\preceq)$ contains an \textit{induced copy} of a poset $(\mathcal P,\preceq')$ if there exists an injective order-preserving function $f:\mathcal P\rightarrow\mathcal Q$ such that $(f(\mathcal P),\preceq)$ is isomorphic to $(\mathcal P,\preceq')$. We denote by $\mathcal P([n])$ the power set of $[n]=\{1,2,\dots,n\}$. More generally, for any finite set $S$, we denote by $\mathcal P(S)$ the power set of $S$. We define the \textit{$n$-hypercube}, denoted by $Q_n$ to be the poset formed by equipping $\mathcal P([n])$ with the partial order induced by inclusion.

If $\mathcal P$ is a finite poset and $\mathcal F$ is a family of subsets of $[n]$, we say that $\mathcal F$ is $\mathcal P$-\textit{saturated} if $\mathcal F$ does not contain an induced copy of $\mathcal P$, and for any $S\notin\mathcal F$, the family $\mathcal F\cup\{S\}$ contains an induced copy of $\mathcal P$. The smallest size of a $\mathcal P$-saturated family of subsets of $[n]$ is called the \textit{induced saturated number}, denoted by $\text{sat}^*(n,\mathcal P)$.

It has been shown that the growth of $\text{sat}^*(n,\mathcal P)$ has a dichotomy. Keszegh, Lemons, Martin, P{\'a}lv{\"o}lgyi and Patk{\'o}s \cite{keszegh2021induced} proved that for any poset the induced saturated number is either bounded or at least $\log_2(n)$. They also conjectured that in fact $\text{sat}^*(n,\mathcal P)$ is either bounded, or at least $n+1$. Later, Freschi, Piga, Sharifzadeh and Treglown \cite{freschi2023induced} improved this result by replacing $\log_2 (n)$ with $2\sqrt{n}$. There is no known poset $\mathcal P$ for which $\text{sat}^*(n,\mathcal P)=\omega(n)$, and it is in fact believed that for any poset, the saturation number is either constant or it grows linearly.

But what determines if a poset does or does not have unbounded saturation number? For an example showing lack of monotonicity we mention that the posets $C_2$ and $3C_2$, displayed below, have bounded saturation number \cite{keszegh2021induced}, while $2C_2$ \cite{keszegh2021induced} has unbounded saturation number.
\begin{center}
\includegraphics[width=6.5cm]{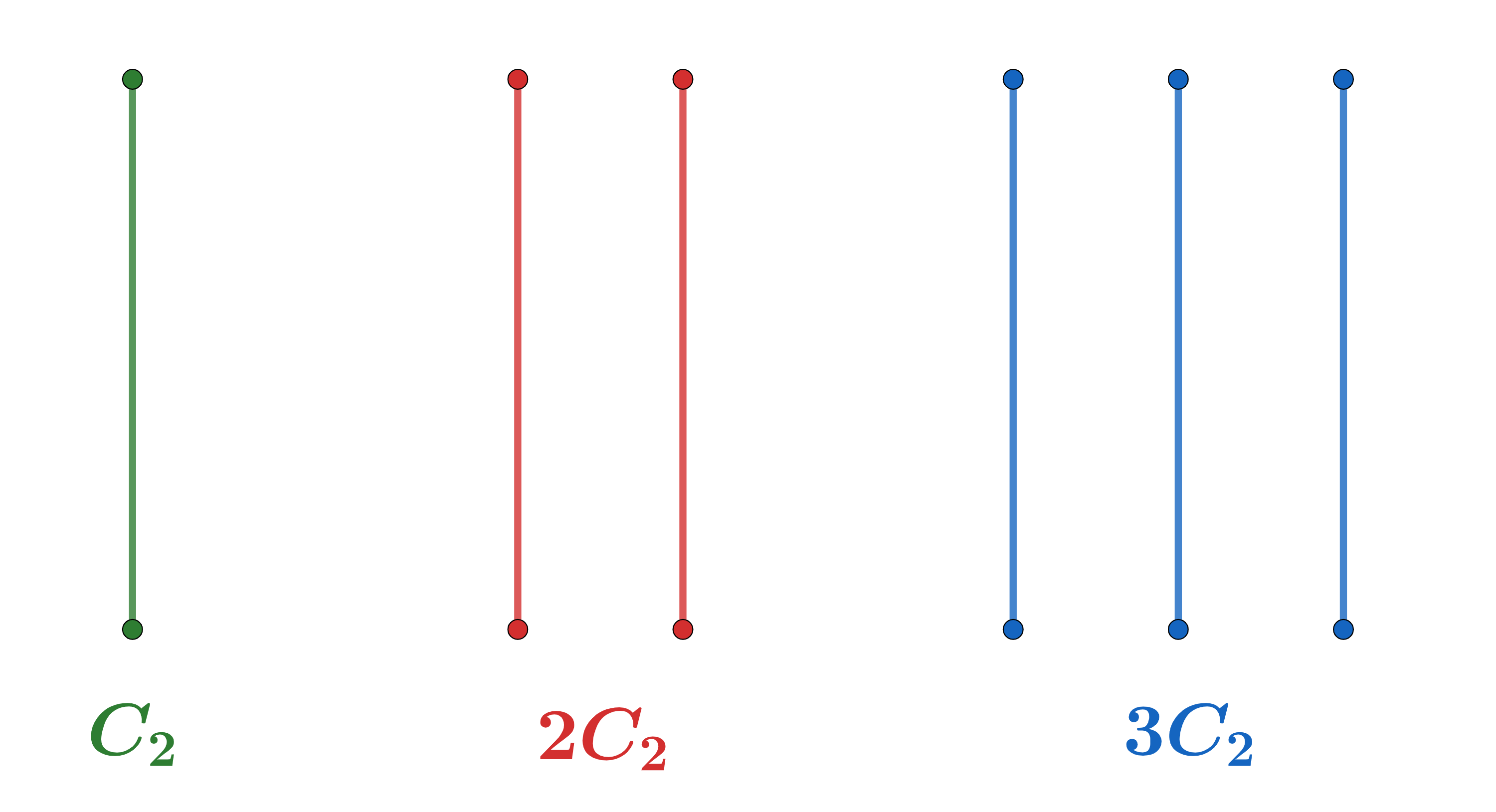}
\end{center}
We say that a family $\mathcal F\subseteq\mathcal P([n])$ \textit{separates the ground set} if, for any two distinct elements $i,j\in[n]$, there exists a set in $\mathcal F$ that contains one but not the other. It was shown in \cite{keszegh2021induced,ferrara2017saturation} that the existence of an $n_0$ and a $\mathcal P$-saturated family in $\mathcal P([n_0])$ that does not separate the elements of the ground set, implies that $\mathcal P$ has bounded saturation number. Conversely, if a family separates the ground set $[n]$, it has size at least $\log_2(n)$. Therefore, $\mathcal P$ has bounded saturation number if and only if there exists a saturated family that does not separate the ground set. However, this tells us nothing about the intrinsic properties a poset must have in order for such a family to exist. In fact, this question is very much open as not even good guesses about what these properties might be exist as of now.

Partial progress towards answering this question was made in \cite{ferrara2017saturation} where it was shown that if a poset has the \textit{unique cover twin property (UCTP)}, then it has unbounded saturation number. In a poset, $y$ covers $x$ if $x<y$ and there is no $z$ with $x<z<y$. We say that a poset has the UCTP if, for any two elements $p_1$ and $p_2$ of the poset, if $p_2$ covers $p_1$, then there exists $p_3$, called their \textit{twin}, that is comparable with exactly one of $p_1$ and $p_2$. We mention that in the literature there is some variation when it comes to the definitions of the UCTP and that of the twin. They are equivalent as far as the results are concerned. An extension of this result was proved in \cite{keszegh2021induced}, where it was shown that attaching a chain above any poset with the UCTP gives a poset with unbounded saturation number still. It was also shown by Freschi, Piga, Sharifzadeh and Treglown \cite{freschi2023induced} that any poset with legs has a saturation number that is at least linear, and therefore unbounded. A poset $\mathcal P$ is said to \textit{have legs} if there exist distinct elements $l_1, l_2, h\in\mathcal P$ such that $l_1, l_2$ are incomparable and less than $h$, and any other element of the poset is greater than $h$. This was further generalised by Liu \cite{liu2025induced} who removed the intermediary point $h$. Sadly, these conditions are not exhaustive as the poset $2C_2$, which does not have the UCTP (or UCTP with top chain) and does not have legs, was shown to have unbounded saturation number \cite{keszegh2021induced}. It is worth pointing out that until now this was the only such example. 
For any finite poset $\mathcal P$, let $\dot{\mathcal P}$ denote the poset obtained from $\mathcal P$ by adding a new element that is greater than all the rest. Motivated by the observation that if $\mathcal P$ is a poset without a unique maximal element, then any $\mathcal P$-saturated family must contain the full set, meaning that whenever a copy of $\mathcal P$ is created a copy of $\dot{\mathcal P}$ is created too, the following conjecture was made.
\begin{conjecture}[\cite{keszegh2021induced}]
\label{maxelement}
Let $\mathcal P$ be a finite poset. Then $\text{sat}^*(n,\mathcal P)$ is bounded if and only if $\text{sat}^*(n,\dot{\mathcal P})$ is bounded.   
\end{conjecture}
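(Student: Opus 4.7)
My plan uses the characterization stated in the excerpt: $\mathcal P$ has bounded saturation number if and only if, for some $n_0$, there exists a $\mathcal P$-saturated family on $[n_0]$ that does not separate the ground set. Since $\dot{\mathcal P} = \{\bullet\} * \mathcal P$, this conjecture is the special case of the gluing program advertised in the abstract in which the top poset is a single point. For the direction $\text{sat}^*(n, \mathcal P)$ bounded $\Rightarrow \text{sat}^*(n, \dot{\mathcal P})$ bounded, I would start from a small non-separating $\mathcal P$-saturated family $\mathcal F \subseteq \mathcal P([n_0])$ and examine $\mathcal F' = \mathcal F \cup \{[n_0]\}$. This family contains no induced $\dot{\mathcal P}$: the top of any such copy lies in $\mathcal P([n_0])$ and is strictly above the bottom $|\mathcal P|$ elements, so those bottom elements are proper subsets of $[n_0]$ and thus lie in $\mathcal F$, giving a forbidden induced $\mathcal P$. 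For $\dot{\mathcal P}$-saturation, adding $S \notin \mathcal F'$ creates an induced $\mathcal P$ using $S$ by the $\mathcal P$-saturation of $\mathcal F$, and capping this copy with $[n_0]$ yields $\dot{\mathcal P}$, \emph{provided} the copy does not itself use $[n_0]$.

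For the converse direction, let $\mathcal G$ be a small non-separating $\dot{\mathcal P}$-saturated family. If $\mathcal G$ has no induced $\mathcal P$, then $\mathcal G$ is already $\mathcal P$-saturated: adding $S \notin \mathcal G$ produces an induced $\dot{\mathcal P}$ whose bottom $|\mathcal P|$ elements form an induced $\mathcal P$ that must use $S$ (else $\mathcal G$ would itself contain $\mathcal P$). If $\mathcal G$ does contain induced $\mathcal P$-copies, I would iteratively delete carefully chosen maximal-in-inclusion sets that serve as ``tops'' of these copies, using $\dot{\mathcal P}$-saturation of $\mathcal G$ to argue that $\mathcal P$-saturation is preserved (and not accidentally broken) after each deletion. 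Boundedness and non-separation of $\mathcal G$ should force this procedure to terminate in a bounded, $\mathcal P$-saturated family.

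The main technical obstacle I anticipate is the embedding issue in the first direction: when $\mathcal P$ has no unique maximum the excerpt's remark forces $[n_0] \in \mathcal F$, and it is then conceivable that every $\mathcal P$-copy created by adding $S$ must use $[n_0]$, blocking the capping argument. My proposed resolution is a ground-set enlargement: work on $[n_0+1]$ and add $[n_0+1]$ to $\mathcal F$ as a second candidate cap, so that one can always choose between $[n_0]$ and $[n_0+1]$ according to which one is free in the embedding. Because $\mathcal F$ does not separate its ground set, this enlargement preserves both bounded size and the non-separation property. If stubborn edge cases remain, the conjecture should fall out of the paper's general gluing theorem applied with $\mathcal P_2 = \{\bullet\}$, since the singleton trivially has bounded saturation.
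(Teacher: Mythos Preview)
The statement you are addressing is a \emph{conjecture} in the paper, attributed to \cite{keszegh2021induced}; the paper does not prove it, and in the concluding section it presents a further conjecture (that $\text{sat}^*(n,\mathcal P_2*\mathcal P_1)$ is unbounded iff one of the $\mathcal P_i$ has unbounded saturation) explicitly as a \emph{strengthening} of Conjecture~\ref{maxelement}. There is therefore no paper proof to compare your attempt against.

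Turning to the content of your proposal, both directions have genuine gaps. In the forward direction your ground-set enlargement does not work as written: once you pass to $[n_0+1]$, the family $\mathcal F$ is no longer $\mathcal P$-saturated there, because a set $S$ with $n_0+1\in S$ is incomparable to every superset in $\mathcal F$ and need not create any copy of $\mathcal P$ (let alone $\dot{\mathcal P}$) when added. Your fallback to ``the paper's general gluing theorem with $\mathcal P_2=\{\bullet\}$'' also fails as stated: Theorem~\ref{bounded} requires $\mathcal P_2$ to have no unique minimal element, which the singleton obviously violates. The paper does remark, in its conclusions and without proof, that the hypotheses of Theorem~\ref{bounded} can be dropped; granting that unproved claim would yield this direction, but then you are invoking an assertion the paper itself declines to prove.

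In the reverse direction your sketch is not a proof at all. You propose to ``iteratively delete carefully chosen maximal-in-inclusion sets'' from a $\dot{\mathcal P}$-saturated family $\mathcal G$, but give no mechanism ensuring that each deletion preserves $\mathcal P$-saturation for the sets outside $\mathcal G$: if the $\dot{\mathcal P}$-copy created by adding some $S\notin\mathcal G$ happens to use a set $T$ you have removed, saturation is lost and nothing in your outline repairs it. Nor is there any argument that the process terminates before the family becomes empty. This direction is, as far as the paper indicates, genuinely open.
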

We mention that forbidden poset problems is a new and rapidly growing area of study in combinatorics. Saturation for posets was introduced by Gerbner, Keszegh, Lemons, Palmer, P{\'a}lv{\"o}lgyi and Patk{\'o}s \cite{gerbner2013saturating}, although this was not for \textit{induced} saturation. Induced poset saturation was first introduced in 2017 by Ferrara, Kay, Kramer, Martin, Reiniger, Smith and Sullivan \cite{ferrara2017saturation}. As of now, there are very few posets for which the growth order of the saturation number is known, and far fewer for which the exact number is known. Some posets for which it is known that the saturation number is linear include $\mathcal V_2$, $\Lambda_2$ \cite{ferrara2017saturation}, the butterfly \cite{ivan2020saturationbutterflyposet, keszegh2021induced}, and the antichain \cite{keszegh2021induced, dhankovic2023saturation, bastide2024exact}. One of the most notorious posets for which the saturation number is unknown is the diamond, the 2-dimensional hypercube, for which the current lower bound is $(4-o(1))\sqrt n$ \cite{ivan2021minimal}, whilst the upper bound is $n+1$ \cite{ferrara2017saturation}. For a nice introduction to the area, we refer the reader to the textbook of Gerbner and Patk{\'o}s \cite{gerbner2018extremal}.

In this paper we discuss the \textit{gluing} operation of posets which turns out to preserve the bounded or unbounded nature of the saturation number of the starting posets. More precisely, for two finite posets $\mathcal P_1$ and $\mathcal P_2$, we define $\mathcal P_2*\mathcal P_1$ to be the poset comprised of an induced copy of $\mathcal P_2$ completely above an induced copy of $\mathcal P_1$, i.e all elements of $\mathcal P_2$ are strictly greater than all the elements of $\mathcal P_1$. For completeness, if $\mathcal P_2$ is the empty poset, then we define $\mathcal P_2*\mathcal P_1$ to be $\mathcal P_1$, and if $\mathcal P_1$ is the empty poset we define $\mathcal P_2*\mathcal P_1$ to be $\mathcal P_2$. Trivially, this operation is associative. It turns out that $\text{sat}^*(n,\mathcal P_2*\mathcal{P}_1$) is controlled, at least in one direction, by $\text{sat}^*(n,\mathcal P_2 * \bullet*\mathcal P_1)$, where $\bullet$ represents the one element poset. The Hasse diagrams of these two posets is illustrated below.
\begin{center}
\includegraphics[width=11.5cm]{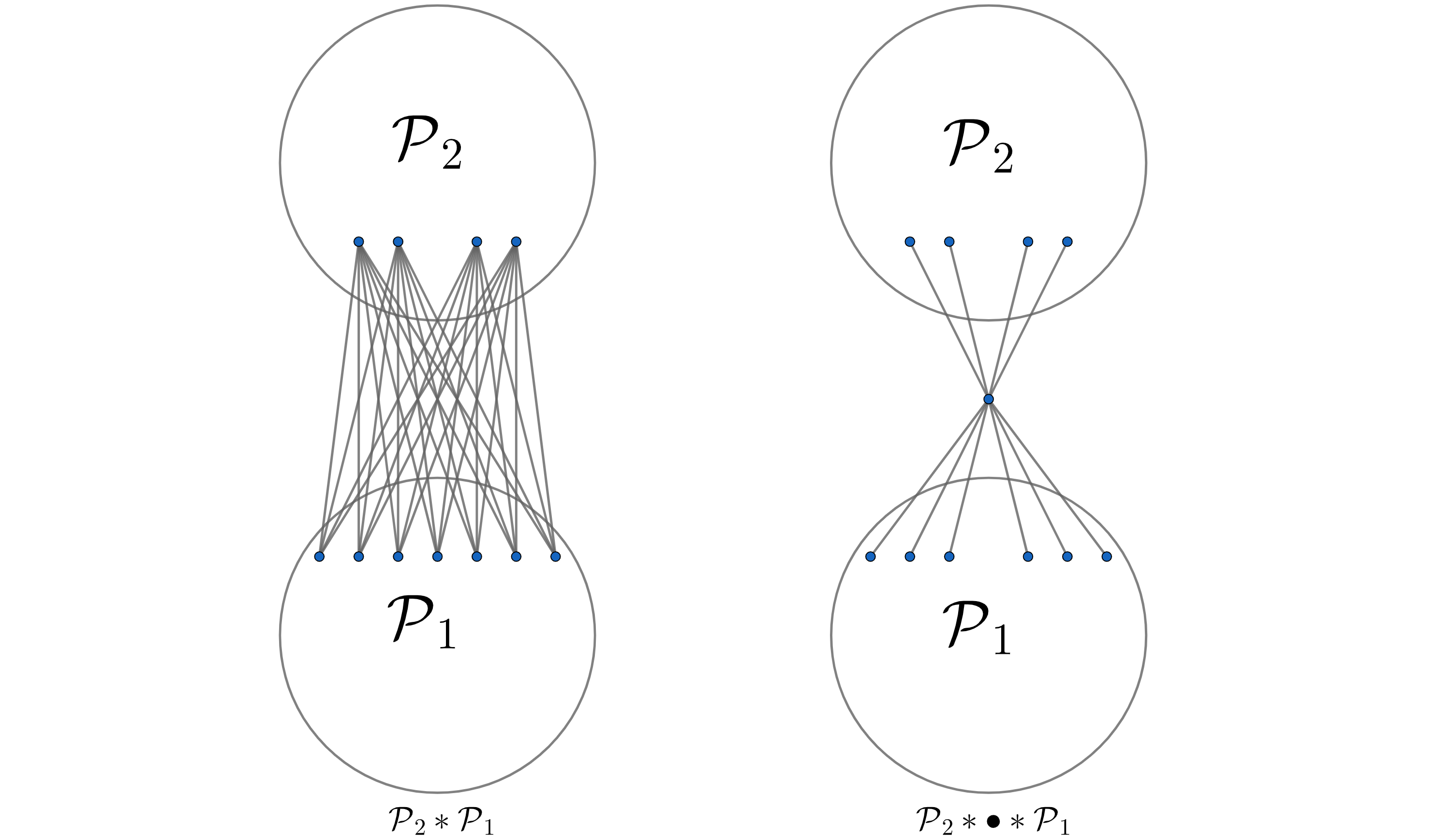}
\end{center}
We also say that a poset has a unique maximal (minimal) element if it contains an element that is greater (smaller) than all the other elements of the poset.

The plan of the paper is as follows. In Section 2 we show that if one of $\mathcal P_1$ or $\mathcal P_2$ has the UCTP, then $\mathcal P_2*\mathcal P_1$ has unbounded saturation number. This generates a plethora of posets that do not have the UCTP but do have unbounded saturation number, for example $\mathcal P*\mathcal A_3$ where $\mathcal P$ is any poset and $\mathcal A_3$ is the antichain of size 3. Furthermore, this extends the result about posets with legs having unbounded saturation number, as any such poset is in fact $\mathcal P*\bullet*\mathcal A_2$, where $\mathcal{A}_2$ is the antichain of size 2. We also discuss in this section some surprising properties of this gluing operation which may be of independent interest, especially towards Conjecture~\ref{maxelement}. Most notably, in Proposition~\ref{propositionmax} we show that the saturation number of $\mathcal P_2*\mathcal P_1$ is at least the saturation number of both $\mathcal P_2*\bullet$ and $\bullet*\mathcal P_1$, where $\mathcal P_1$ and $\mathcal P_2$ are non-empty posets without a unique maximal and a unique minimal element, respectively.

In Section 3 we study \textit{special} posets. We say that a poset is special if it is comprised of a single element, and another unrelated poset with a unique maximal element and a unique minimal element. We show that if $\mathcal P_1$ and $\mathcal P_2$ are two special posets, then the saturation number of $\mathcal P_2*\mathcal P_1$ is bounded above by a linear function of $\text{sat}^*(n,\mathcal P_1)$ and $\text{sat}^*(n, \mathcal P_2)$, which is otherwise independent of $n$. In particular, gluing two special posets with bounded saturation number will result in another poset with bounded saturation number. We also give linear upper bounds for all special posets. This gives us an interesting corollary. Given any finite poset $\mathcal P$, we can embed it in a special poset $\mathcal P^*$ by adding, if necessary, a unique minimal, a unique maximal and an unrelated element. Thus, $\mathcal P$ is a sub-poset of $\mathcal P^*$ which has saturation number at most linear, and $|\mathcal P^*|\leq |\mathcal P|+3$, getting close to the conjecture that the saturation number of any poset is at most linear.

In Section 4 we focus entirely on a subclass of posets generated by repeatedly gluing antichains, i.e. multilayered complete posets. Under the assumption that there are no two consecutive layers of size 1, we establish a linear upper bound for all such posets, extending a result of Liu \cite{liu2025induced} who established such a bound for bipartite complete posets. Furthermore, since our construction is of a completely different nature, we also get a better multiplicative constant for the saturation number of bipartite posets.

Finally, in Section 5 we discuss the poset equivalent notion of weak saturation for graphs. In other words, for a finite poset $\mathcal P$, we want to find the smallest family $\mathcal F\subseteq\mathcal P([n])$ such that there exists a way of adding all the sets outside of $\mathcal F$, one by one, such that each time we create a new induced copy of $\mathcal P$. We call such an $\mathcal F$ a \textit{$\mathcal P$-percolating} family. We call the size of the smallest $\mathcal P$-percolating family \textit{the percolation number} of $\mathcal P$, denoted by $\text{sat}_p(n,\mathcal P)$. We show that $\text{sat}_p(n,\mathcal P)$ is $|\mathcal P|+1$ if $\mathcal P$ does not have a unique maximal or a unique minimal element, $|\mathcal P|-1$ if $\mathcal P$ has both a unique maximal and a unique minimal element, and $|\mathcal P|$ otherwise, answering a question from \cite{workshop}. Moreover, we show that, with the exception of the set $[n]$ which must be present if $\mathcal P$ does not have a unique maximal element, the $\mathcal P$-percolating family can be taken to be fixed, i.e. the same for all $n$ sufficiently large.

Throughout the paper our notation is standard. For a finite set $S$, we denote by $\binom{S}{k}$ the collection of all subsets of $S$ of size $k$. When $S=[n]$ this is also known as the $k^{\text{th}}$ layer of $Q_n$. We also denote by $\binom{S}{\leq k}$ the collection of all subsets of $S$ of size at most $k$.

\section{Families of non-UCTP posets with unbounded $\text{sat}^*(n,\mathcal P)$}
The main aim of this section is to show that gluing operation of two posets gives rise to a poset with unbounded saturation number, as long as  at least one of them has the UCTP. The proof has two main ingredients. The first one is Proposition~\ref{glueinglemma}, an intermediary step, in which we show that the saturation number is unbounded if we add a middle point strictly between the two posets. The proof relies on the fact that a poset has unbounded saturation number if and only if any saturated family separates the ground set. 

The second part of the proof is Proposition~\ref{poset-point-poset} which is a general statement that shows the close relationship between gluing two posets and gluing them via a single point in the middle. More precisely, if $\mathcal P_2$ and $\mathcal P_1$ are two posets without a unique minimal element or a unique maximal element, respectively, then a $\mathcal P_2*\mathcal P_1$-saturated family is also $\mathcal P_2*\bullet*\mathcal P_1$-saturated.

We start with a simple lemma.
\begin{lemma}\label{UCTP} If $\mathcal Q_1$ is a poset with the UCTP and a unique maximal (minimal) element, then removing that element results in a poset with the UCTP and without a maximal (minimal) element. Furthermore, if $\mathcal Q_2$ is a poset with the UCTP that does not have a unique maximal (minimal) element, then $\bullet*\mathcal Q_2$ ($\mathcal Q_2*\bullet)$ has the UCTP.
\end{lemma}
\begin{proof}
For the first part, let $q$ be the unique maximal element of $\mathcal Q_1$. For any $x$ that is covered by $y$ in $\mathcal Q_1\setminus\{q\}$, and hence in $\mathcal Q_1$, their twin could not be $p$ since it is comparable to both. Thus, $\mathcal Q_1\setminus\{q\}$ has the UCTP. Furthermore, $\mathcal{Q}_1 \setminus \{q\}$ cannot have a unique maximal element, say, $r$, since $q$ would be the unique cover of $r$ in $\mathcal{Q}_1$, and $q$ would not have a twin, as every element of $\mathcal{Q}_1 \setminus \{q, r\}$ is comparable to both. By taking complements, the same analysis is true for unique minimal elements.

The second claim of the lemma follows from the observation that the only elements of $Q_2$ covered (covering) by the new unique maximal (minimal) element are the maximal (minimal) elements of $\mathcal Q_2$. Since $\mathcal Q_2$ does not have a unique maximal (minimal) element, they are incomparable, hence each other's twins.
\end{proof}
\begin{proposition}\label{glueinglemma}
Suppose $\mathcal P_1$ has the UCTP, $|\mathcal P_1|\geq 2$, and $\mathcal P_2$ is any poset. Then $sat^*(n,\mathcal P_2 * \bullet*\mathcal P_1)\rightarrow\infty$ as $n\rightarrow\infty$.
\end{proposition}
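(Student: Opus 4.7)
The plan is to prove, via the standard equivalence cited in the introduction, that every $\mathcal{P}$-saturated family on $[n]$ with $\mathcal{P} := \mathcal{P}_2 * \bullet * \mathcal{P}_1$ must separate the ground set; this yields $\text{sat}^*(n,\mathcal{P}) \to \infty$.

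Suppose for contradiction that $\mathcal{F}$ is $\mathcal{P}$-saturated on $[n]$ but does not separate some pair $i \neq j \in [n]$, and let $\sigma = (i\,j)$. Every $A \in \mathcal{F}$ is $\sigma$-invariant, so $\sigma(\mathcal{F}) = \mathcal{F}$. My aim is to exhibit an induced copy of $\mathcal{P}$ inside $\mathcal{F}$, contradicting $\mathcal{P}$-freeness. Pick any $S \notin \mathcal{F}$ separating $i$ and $j$. By saturation, $\mathcal{F} \cup \{S\}$ contains an induced copy $(X_p)_{p \in \mathcal{P}}$ of $\mathcal{P}$ with $X_{p_0} = S$ in some slot $p_0 \in \mathcal{P}$, while every other $X_p$ lies in $\mathcal{F}$ and is $\sigma$-invariant. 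Applying $\sigma$ to the copy yields a second copy that agrees with the first everywhere except at $p_0$, where $\sigma(S) \neq S$ now appears. The natural $\sigma$-invariant candidates to take the place of $S$ are $S \cap \sigma(S)$ and $S \cup \sigma(S)$, bracketing $S$ from below and above respectively.

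I split into three cases by the layer of $p_0$. When $p_0 \in \mathcal{P}_1$, the middle point $X_\bullet$ and the entire top copy of $\mathcal{P}_2$ already lie in $\mathcal{F}$; if $\mathcal{P}_1$ has a cover relation incident to $p_0$, its UCTP twin $X_{p_3}\in \mathcal{F}$ together with $X_\bullet$ pins down the Boolean interval in which any $\sigma$-invariant replacement for $S$ must sit, and $\sigma$-symmetry forces such a set to lie in $\mathcal{F}$. In the antichain subcase $\mathcal{P}_1 = \mathcal{A}_k$ (where UCTP is vacuous but $k\geq 2$), the remaining bottom sets are pairwise incomparable and $\sigma$-invariant, and one of $S \cap \sigma(S)$ or $S \cup \sigma(S)$ substitutes for $S$ directly. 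When $p_0 \in \mathcal{P}_2$ the argument is symmetric: UCTP is applied to a cover pair inside $\mathcal{P}_1$ (or the antichain structure of $\mathcal{P}_1$ is exploited), and the middle set $X_\bullet$ serves as a buffer between the fixed bottom copy in $\mathcal{F}$ and the partially modified top copy.

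The main obstacle is the case $p_0 = \bullet$: here both copies of $\mathcal{P}_1$ and $\mathcal{P}_2$ already lie in $\mathcal{F}$, and we need to produce a $\sigma$-invariant middle set $M^* \in \mathcal{F}$ strictly above $\bigcup_{p \in \mathcal{P}_1} X_p$ and strictly below $\bigcap_{q \in \mathcal{P}_2} X_q$ with no extra comparabilities. The plan is to apply saturation to the $\sigma$-invariant candidate $S \cup \sigma(S)$: either it already lies in $\mathcal{F}$ and serves as $M^*$ directly, or its addition creates a further induced copy of $\mathcal{P}$ whose role-assignment, dissected via UCTP of $\mathcal{P}_1$, forces $M^*$ to exist in $\mathcal{F}$. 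Threading UCTP through this reduction while preserving the induced-copy condition across all three layers of $\mathcal{P}$ is the delicate technical step, and the hypothesis $|\mathcal{P}_1|\geq 2$ together with UCTP (or, for antichains, the resulting incomparability structure) is exactly what supplies the structural pair needed to power the argument.
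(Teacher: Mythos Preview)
Your sketch has a genuine gap at its core: the repeated claim that ``$\sigma$-symmetry forces such a set to lie in $\mathcal{F}$'' is unjustified. The family $\mathcal{F}$ is $\sigma$-invariant \emph{as a family}, but an individual $\sigma$-invariant set such as $S\cup\sigma(S)$ or $S\cap\sigma(S)$ has no reason to belong to $\mathcal{F}$. So in your case $p_0\in\mathcal{P}_1$ (and in the antichain subcase) you have not produced a replacement set in $\mathcal{F}$; and even if one of these candidates were in $\mathcal{F}$, you have not checked that it reproduces all the required comparabilities and incomparabilities with the other $X_p$'s (for instance, $S\cup\sigma(S)$ may coincide with $X_\bullet$, or become comparable to some $X_q$ that $S$ was not). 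Your case $p_0\in\mathcal{P}_2$ is also not handled: you only have UCTP for $\mathcal{P}_1$, and the bottom copy already lies in $\mathcal{F}$, so invoking UCTP there does nothing toward repairing the damaged $\mathcal{P}_2$ part. Finally, in the case $p_0=\bullet$ you correctly identify the difficulty but your plan (apply saturation to $S\cup\sigma(S)$ and recurse) restarts the same three-way case split with no termination argument.

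The paper sidesteps all of this by never letting the new set land outside the $\mathcal{P}_1$ layer. It first adds the singleton $\{i\}$: since every element of the $\mathcal{P}_2*\bullet$ part is strictly above at least two sets, $\{i\}$ is forced into the $\mathcal{P}_1$ part, so a copy of $\mathcal{P}_2*\bullet$ already lives inside $\mathcal{F}_2=\{A\in\mathcal{F}:\{i,j\}\subseteq A\}$. Among all such copies it picks one whose minimal point $S$ has $|S|$ smallest, then adds $S\setminus\{j\}$. A short argument shows $S\setminus\{j\}$ again lands in the $\mathcal{P}_1$ part and that $S$ itself must appear in the resulting copy (otherwise swap $S\setminus\{j\}$ for $S$). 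Now $S$ covers $S\setminus\{j\}$ inside the $\bullet*\mathcal{P}_1$ part, and UCTP of $\bullet*\mathcal{P}_1$ (after reducing to the case where $\mathcal{P}_1$ has no unique maximum) yields a twin $A\in\mathcal{F}$; each of the two possible twin configurations contradicts either non-separation of $i,j$ or the minimality of $|S|$. The extremal choice of $S$ is what makes the argument terminate---that ingredient is missing from your approach.
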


\begin{proof}
Suppose  first that $\mathcal P_1$ has a unique maximal element, call it $p$. Then $\mathcal P_2 *\bullet*\mathcal P_1$ can be thought of as $(\mathcal P_2*\bullet)*p*(\mathcal P_1\setminus\{p\})$. By Lemma~\ref{UCTP} we know that $\mathcal P_1\setminus\{p\}$ still has the UCTP. Therefore, without loss of generality, we may assume that $\mathcal P_1$ does not have a unique maximal element.

As explained in the introduction, it suffices to show that if $\mathcal F\subseteq \mathcal P([n])$ is a $\mathcal P_2 *\bullet * \mathcal P_1$-saturated family, then $\mathcal F$ separates the points of the ground set $[n]$. In other words, for any two distinct $i,j\in[n]$, there exists a set in $\mathcal F$ that contains one but not the other.

Let $\mathcal{F}$ be a $\mathcal P_2*\bullet*\mathcal P_1$-saturated family which does not separate $i$ and $j$ for some $i,j\in [n]$. We can therefore split the family as follows: $\mathcal F=\mathcal F_0\cup\mathcal F_2$, where $\mathcal F_0=\{A\in\mathcal F: A\cap\{i,j\}=\emptyset\}$ and $\mathcal F_2=\{A\in\mathcal F:\{i,j\}\subseteq A\}$. After relabeling the ground set, we may assume for simplicity that $i=1$ and $j=2$.

Consider now the singleton $\{1\}$ which, by assumption, is not in $\mathcal F$. Thus $\mathcal F\cup\{\{1\}\}$ contains an induced copy of $\mathcal P_2*\bullet*\mathcal P_1$, which must contain the singleton $\{1\}$. Examine the $\mathcal{P}_2 * \bullet$ part of this copy. We observe that any element in this part is strictly greater than at least two other sets, namely those in the $\mathcal{P}_1$ part. As such, we cannot have the singleton $\{1\}$ in the $\mathcal P_2*\bullet$ part of the copy, so it must be in the $\mathcal P_1$ part. Therefore, all elements of the $\mathcal P_2*\bullet$ part are in $\mathcal F$ and contain 1. Hence, $\mathcal F_2$ contains an induced copy of $\mathcal P_2*\bullet$.

Let $S$ be the set that is less than all of $\mathcal P_2$ in this copy of $\mathcal P_2*\bullet$ if $\mathcal F_2$. Furthermore, let $|S|$ be minimal over all sets satisfying this property. In other words, if $A$ represents the minimal element of an induced copy of $\mathcal P_2*\bullet$ in $\mathcal F_2$, then $|A|\geq |S|$.

\begin{center}
\includegraphics[width=5cm]{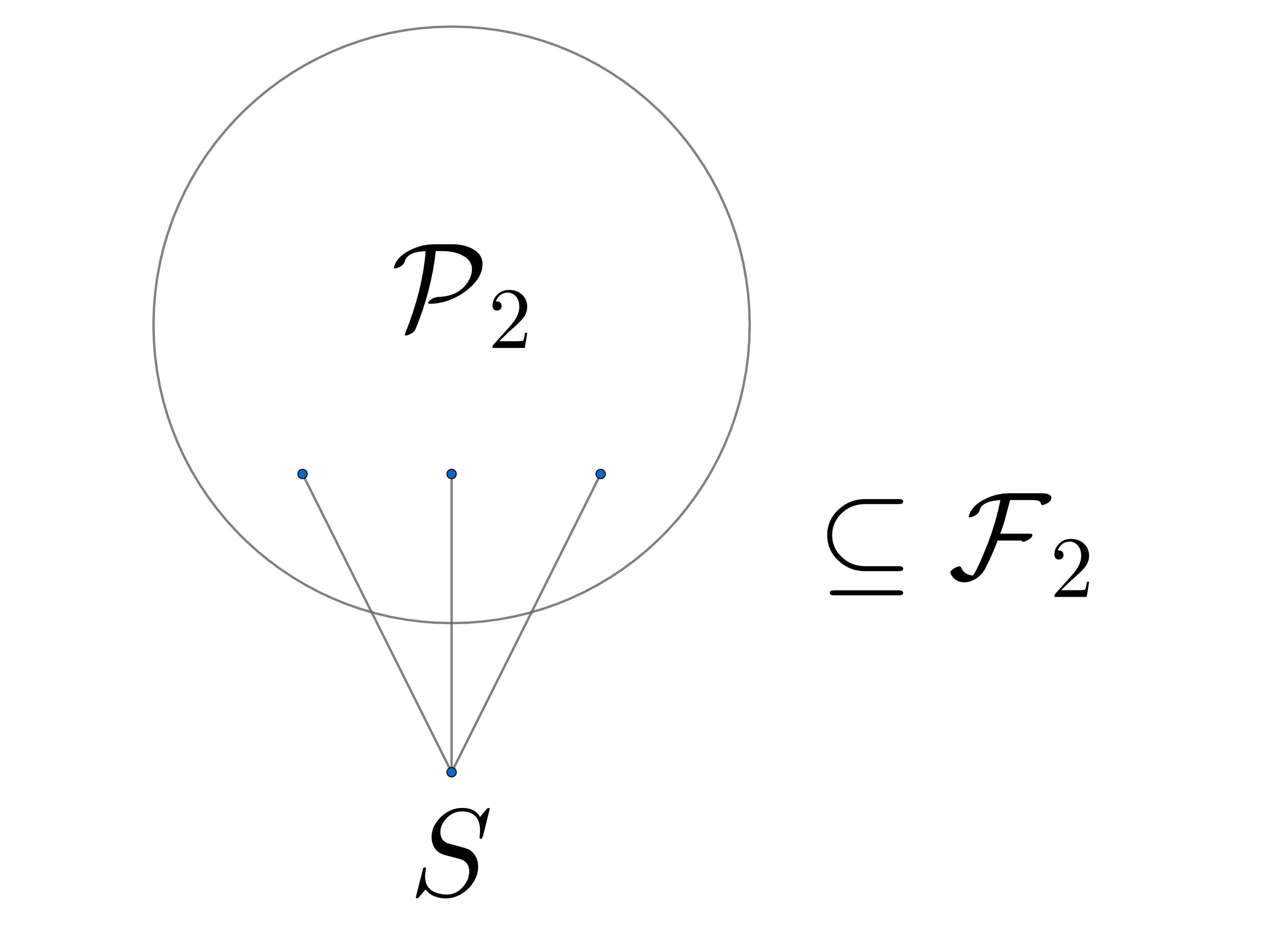}
\end{center}

By assumption, $S=S_0\cup\{1,2\}\in\mathcal F$ and $S_0\cup\{1\}\notin\mathcal F$. Thus, $\mathcal F \cup \{S_0\cup\{1\}\}$ must contain an induced copy of $\mathcal P_2*\bullet*\mathcal P_1$, and $S_0\cup\{1\}$ must be part of this copy.

If $S_0\cup\{1\}$ belongs to the $\mathcal P_2*\bullet$ part of this copy, then its $\mathcal P_1$ copy is in $\mathcal F_0$ and it is less than $S_0\cup\{1\}$, hence also less than $S$. However, $S$ is the minimal element of a copy of $\mathcal P_2*\bullet$ in $\mathcal F$, and so, putting this copy on top of the above copy of $\mathcal P_1$ we obtain an induced copy of $\mathcal P_2*\bullet*\mathcal P_1$ in $\mathcal F$, a contradiction. Therefore, $S_0\cup\{1\}$ is part of the $\mathcal P_1$ part.

Now, observe that if $S$ is not an element of this copy of $\mathcal P_2*\bullet*\mathcal P_1$, then we may replace $S_0\cup\{1\}$ with $S$, and all relations are preserved, since $\mathcal F$ does not separate 1 and 2, thus yielding an induced copy of $\mathcal P_2*\bullet*\mathcal P_1$ in $\mathcal F$, which leads to a contradiction. Therefore both $S$ and $S_0\cup\{1\}$ must be elements of this copy of $\mathcal P_2*\bullet*\mathcal P_1$. Moreover, since $S_0\cup\{1\}$ is in the $\mathcal P_1$ part of the copy, and $S=S_0\cup\{1,2\}$, $S$ must be in the $\bullet*\mathcal P_1$ part of the copy. 
\begin{center}
\includegraphics[width=10cm]{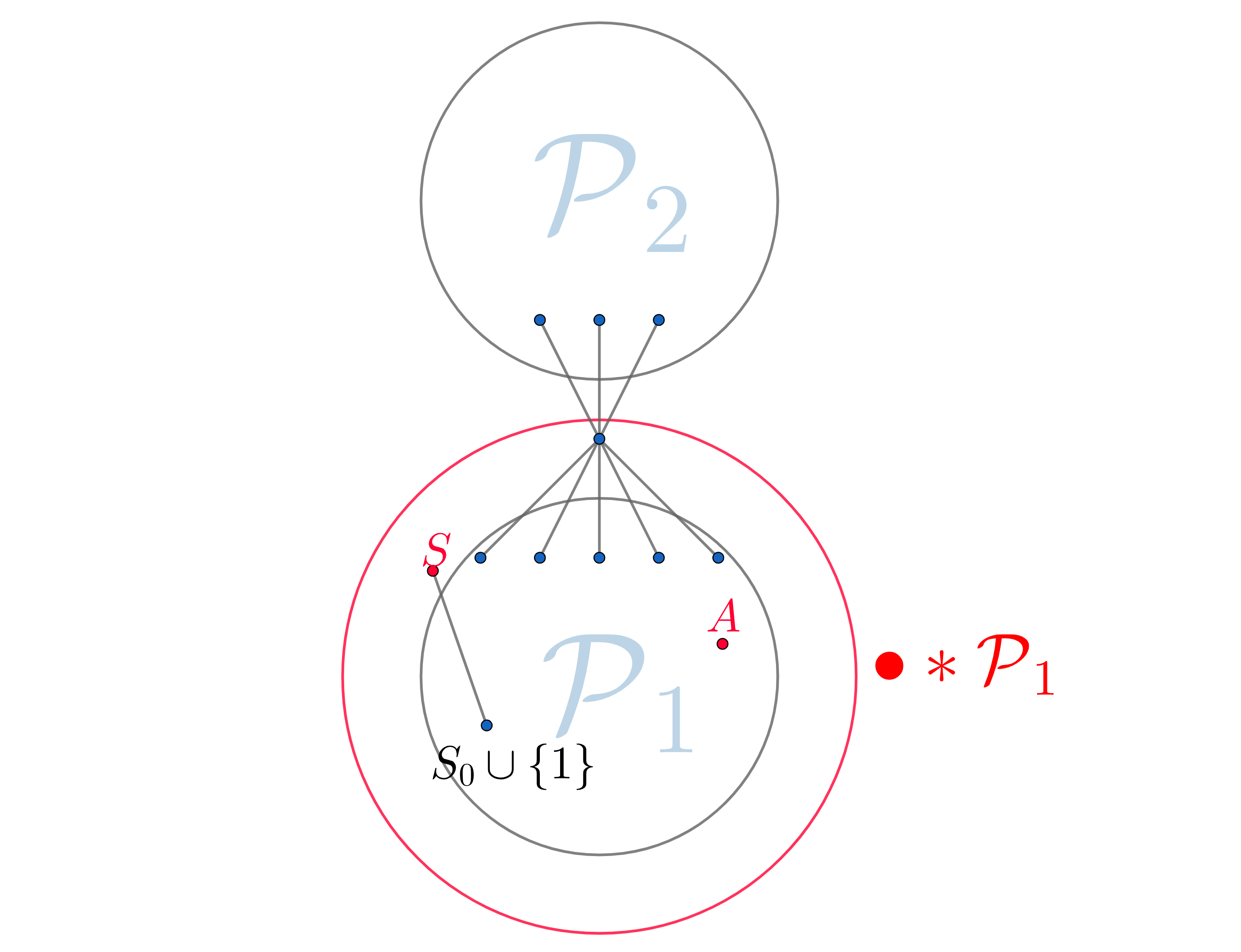}
\end{center}
But now we see that the poset $\bullet*\mathcal P_1$ has the UCTP (since $\mathcal P_1$ does not have a unique maximal element). As $S$ covers $S_0\cup\{1\}$, there exists a third set $A$ in the $\bullet*\mathcal P_1$ part of this copy of $\mathcal P_2*\bullet*\mathcal P_1$ (which therefore is an element of $\mathcal F$) such that either $A$ covers $S_0\cup \{1\}$, or $A$ and $S_0\cup \{1\}$ are twins. This means that either $A$ and $S$ are incomparable and $S_0\cup\{1\}\subsetneq A$, or  $A\subsetneq S$ and $S_0\cup\{1\}$ is incomparable to $A$.

In the first case, since $A$ and $S$ are incomparable and $S\setminus\{2\}\subseteq A$, we must have $1\in A$ and $2\notin A$, a contradiction as $A$ would separate 1 and 2. In the second case, as $A\subsetneq S$ and $A$ and $S\setminus\{2\}$ are incomparable, we deduce that $2\in A$, so $A\in \mathcal F_2$. However, $|A|<|S|$, and replacing $S$ with $A$ in a copy of $\mathcal P_2*\bullet$, where $S$ is the minimal element, gives a copy of $\mathcal P_2*\bullet$ whose minimal element is $A$. This contradicts the minimality of $|S|$, and thus completes the proof.
\end{proof}
We now move on and establish the following relation between $\text{sat}^*(n,\mathcal P_2*\mathcal P_1)$ and $\text{sat}^*(n,\mathcal P_2*\bullet*\mathcal P_1)$, where $\mathcal P_2$ is assumed to not have a unique minimal element, and $\mathcal P_1$ is assumed to not have a unique maximal element.
\begin{proposition}\label{poset-point-poset} Let $\mathcal P_1$ and $\mathcal P_2$ be non-empty posets such that $\mathcal P_1$ does not have a unique maximal element and $\mathcal P_2$ does not have a unique minimal element. Let $\mathcal F$ be a $\mathcal P_2*\mathcal P_1$-saturated family with ground set $[n]$. Then $\mathcal F$ is also a $\mathcal P_2*\bullet*\mathcal P_1$-saturated family with ground set $[n]$.
\end{proposition}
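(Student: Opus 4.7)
My plan splits the saturation requirement into the two standard parts. The first, that $\mathcal{F}$ contains no induced $\mathcal{P}_2 * \bullet * \mathcal{P}_1$, is immediate: any such copy would yield an induced $\mathcal{P}_2 * \mathcal{P}_1$ by discarding the middle point, contradicting the hypothesis. For the second, I fix $S \notin \mathcal{F}$ and apply $\mathcal{P}_2 * \mathcal{P}_1$-saturation to obtain an induced copy $\mathcal{A} \cup \mathcal{B}$ of $\mathcal{P}_2 * \mathcal{P}_1$ in $\mathcal{F} \cup \{S\}$ using $S$, with $\mathcal{A}$ the $\mathcal{P}_1$-part and $\mathcal{B}$ the $\mathcal{P}_2$-part. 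The task reduces to exhibiting a middle set $M \in \mathcal{F} \cup \{S\}$ satisfying $A \subsetneq M \subsetneq B$ for every $A \in \mathcal{A}$ and every $B \in \mathcal{B}$.

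The key observation I would establish is that $U := \bigcup \mathcal{A}$ and $V := \bigcap \mathcal{B}$ both lie strictly between $\mathcal{A}$ and $\mathcal{B}$: the fact that $\mathcal{P}_1$ has at least two maximal elements yields $U \supsetneq A$ for every $A \in \mathcal{A}$ (two incomparable maximals $A_1, A_2 \in \mathcal{A}$ give $A_1 \cup A_2 \supsetneq A_i$, while any other $A$ lies below some maximal), and symmetrically for $V$. Hence we are done whenever $U$ or $V$ already lies in $\mathcal{F} \cup \{S\}$, and the real work is to force such a candidate into $\mathcal{F} \cup \{S\}$. My plan here is an iteration.

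I would treat the case $S \in \mathcal{A}$ first; the case $S \in \mathcal{B}$ is symmetric, iterating with $U$ in place of $V$. Suppose $V \notin \mathcal{F} \cup \{S\}$ (note $V \neq S$ since $V \supsetneq S$). Saturation of $\mathcal{F}$ then produces an induced $\mathcal{P}_2 * \mathcal{P}_1$ in $\mathcal{F} \cup \{V\}$ using $V$. If $V$ were placed in the top part of this copy, its bottom $\mathcal{A}^V \subseteq \mathcal{F}$ would lie strictly below $V$ and hence strictly below $\mathcal{B} \subseteq \mathcal{F}$ (here we use $S \in \mathcal{A}$), so $\mathcal{A}^V \cup \mathcal{B}$ would be an induced $\mathcal{P}_2 * \mathcal{P}_1$ entirely inside $\mathcal{F}$, contradicting saturation. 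Hence $V$ sits in the bottom of the new copy, yielding a new $\mathcal{P}_2$-part $\mathcal{B}^V \subseteq \mathcal{F}$ strictly above $V$, so $\mathcal{A} \cup \mathcal{B}^V$ is a fresh copy of $\mathcal{P}_2 * \mathcal{P}_1$ in $\mathcal{F} \cup \{S\}$. The crucial point is that its new intersection $V' := \bigcap \mathcal{B}^V$ strictly contains $V$: applying the no-unique-maximum argument to $\mathcal{A}^V \cong \mathcal{P}_1$ (which contains $V$) gives $\bigcup \mathcal{A}^V \supsetneq V$, while each element of $\mathcal{A}^V$ lies strictly below each of $\mathcal{B}^V$, so $\bigcup \mathcal{A}^V \subseteq V'$. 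Iterating yields a strictly increasing chain $V = V_0 \subsetneq V_1 \subsetneq \cdots$ in the subset lattice of $[n]$, with each $V_k \subsetneq [n]$ (since $\mathcal{B}_k$ contains at least two distinct sets); it must terminate in the finite lattice $\mathcal{P}([n])$, and termination forces some $V_k$ to lie in $\mathcal{F} \cup \{S\}$, supplying the required middle.

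The main obstacle I expect is proving the strict increase $V_k \subsetneq V_{k+1}$ at each step; without it the iteration could stall without ever producing a set in $\mathcal{F} \cup \{S\}$. Both hypotheses of the proposition enter in tandem here: $\mathcal{P}_2$'s lack of a least element keeps each $V_k$ strictly below its corresponding $\mathcal{B}_k$, while $\mathcal{P}_1$'s lack of a greatest element guarantees $\bigcup \mathcal{A}^{V_k} \supsetneq V_k$ at every iteration.
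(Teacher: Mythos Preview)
Your proof is correct and follows essentially the same approach as the paper: both identify $V=\bigcap\mathcal B$ as the candidate middle point, and both use that if $V\notin\mathcal F$ then saturation (with $V$ forced into the lower part) produces a new top $\mathcal B'$ whose intersection strictly enlarges. The only difference is packaging: the paper selects at the outset a copy maximising $|\bigcap\mathcal B|$ and derives a one-step contradiction, whereas you spell out the underlying iteration and terminate it by finiteness of $\mathcal P([n])$; your version also sidesteps the paper's preliminary observation that the added set cannot occur in both the top and bottom parts across different copies.
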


\begin{proof}
We call $\mathcal P_1$ the `lower' part, and $\mathcal P_2$ the `upper' part of $\mathcal P_2*\mathcal P_1$ respectively.

Let $\mathcal F$ be a $\mathcal P_2*\mathcal P_1$-saturated family. Since it is $\mathcal P_2*\mathcal P_1$-free, it is also $\mathcal P_2*\bullet*\mathcal P_1$-free. Suppose that $\mathcal F$ is not $\mathcal P_2*\bullet*\mathcal P_1$-saturated. Then there exists some $A\notin\mathcal F$ such that $\mathcal F\cup \{A\}$ does not contain a copy of $\mathcal P_1*\bullet*\mathcal P_2$. However, by assumption, $\mathcal F \cup\{A\}$ contains an induced copy of $\mathcal P_2*\mathcal P_1$, and $A$ must be an element of any such copy.

Furthermore, we observe that no $A\notin\mathcal F$ can be both a part of the lower part and upper part of some (possibly distinct) copies of $\mathcal P_2*\mathcal P_1$ in $\mathcal F$. This is because if $A$ is in the upper part of $\widehat{\mathcal P_2}*\widehat{\mathcal P_1}$ and in the lower part of $\widetilde{\mathcal P_2}*\widetilde{\mathcal P_1}$ (where $\widehat{\mathcal P_2}*\widehat{\mathcal P_1}$ and $\widetilde{\mathcal P_2}*\widetilde{\mathcal P_1}$ are both copies of $\mathcal P_2*\mathcal P_1$ in $\mathcal F\cup \{A\}$), then $\widetilde{\mathcal P}_1,\widehat{\mathcal P_2}\subseteq \mathcal{F}$. Furthermore, $A$ is greater than every element of $\widetilde{\mathcal P_1}$ and smaller than every element of $\widehat{\mathcal P_2}$. Therefore, every element of $\widetilde{\mathcal P_1}$ is smaller than every element of $\widehat{\mathcal P_2}$, so $\widehat{\mathcal{P}_2} \cup \widetilde{\mathcal{P}_1}$ is an induced copy of $\mathcal{P}_2 *\mathcal{P}_1$ in $\mathcal{F}$, a contradiction.

Since the structure is invariant under taking complements, we may assume, without loss of generality, that $A$ may only appear in the lower part of a copy of $\mathcal P_2*\mathcal P_1$. Out of all the copies of $\mathcal P_2*\mathcal P_1$ that $A$ creates when added to $\mathcal F$, we look at their upper part (which does not contain $A$ by assumption) and pick the one with maximal intersection. More formally, let $\widehat{\mathcal P_2}*\widehat{\mathcal P_1}$ be a copy of $\mathcal P_2*\mathcal P_1$ in $\mathcal F \cup \{A\}$ with $|\cap_{B \in \widehat{\mathcal P_2}}B|\) maximal. Note also that $A\in\widehat{\mathcal P_1}$.

Let $S = \cap_{B \in \widehat{\mathcal P_2}}B$. Then $S$ is smaller than or equal to every element of $\widehat{\mathcal P_2}$ and greater than or equal to every element of $\widehat{\mathcal P_1}$. Since $\widehat{\mathcal{P}_1}$ does not have a unique maximal element, and $\widehat{\mathcal{P}_2}$ does not have a unique minimal element, $S \not \in \widehat{\mathcal{P}_2}*\widehat{\mathcal{P}_1}$. Therefore, $\{S\} \cup \widehat{\mathcal{P}_2} *\widehat{\mathcal{P}_1}$ is an induced copy of $\mathcal P_2*\bullet*\mathcal P_1$ in $\mathcal F\cup\{A,S\}$. Thus $S\notin\mathcal F$, so $\mathcal{F}\cup\{S\}$ contains an induced copy of $\mathcal P_2*\mathcal P_1$, which must contain $S$. For clarity, we will simply refer to this copy as $\mathcal P_2*\mathcal P_1$.

First, we observe that $S$ cannot be in $\mathcal P_2$, as this would mean that $S$ is greater than every element of $\mathcal P_1$. But $S$ is smaller than every element of $\widehat{\mathcal P_2}$ by construction. Therefore, $\widehat{\mathcal P_2}\cup\mathcal P_1$ is an induced copy of $\mathcal P_2*\mathcal P_1$ in $\mathcal F$, a contradiction. So $S\in\mathcal P_1$.

Since $S$ is less than every element of $\mathcal{P}_2$ and greater than every element of $\widehat{\mathcal{P}_1}$, we have that every element of $\widehat{\mathcal{P}_1}$ is less than every element of $\mathcal{P}_2$. Therefore, $\widehat{\mathcal{P}_1}\cup \mathcal{P}_2$ is an induced copy of $\mathcal{P}_2*\mathcal{P}_1$ in $\mathcal{F}\cup \{A\}$. By maximality of $|S|$, we have that $|\cap_{B \in \mathcal{P}_2}B| \leq |S|$. However, since every element of $\mathcal{P}_2$ is greater than every element of $\mathcal{P}_1$, in particular $S$, we must have that $|\cap_{B \in \mathcal{P}_2}B|=|S|$. But this is a contradiction, as this would imply that $S$ is the unique maximal element of $\mathcal P_1$ (and $\mathcal{P}_1$ does not have a unique maximal element). Therefore $\mathcal F$ is $\mathcal P_2*\bullet*\mathcal P_1$-saturated.
\end{proof}
We are now ready to prove the main result of this section.
\begin{theorem}\label{glueingthm}
Let $\mathcal P_1$ and $\mathcal P_2$ be two finite posets such that at least one of them has the UCTP and at least two elements. Then $\text{sat}^*(n,\mathcal P_2*\mathcal P_1)\rightarrow \infty$ as $n\rightarrow\infty$. 
\end{theorem}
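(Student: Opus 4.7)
The plan is to reduce the theorem to the two propositions just established. Note that the complementation map $\mathcal{F} \mapsto \{[n] \setminus A : A \in \mathcal{F}\}$ turns a $\mathcal{P}$-saturated family into a $\mathcal{P}^*$-saturated family, where $\mathcal{P}^*$ is the dual poset, and the UCTP is self-dual; moreover $(\mathcal{P}_2 * \mathcal{P}_1)^* = \mathcal{P}_1^* * \mathcal{P}_2^*$. Combining these, I may assume without loss of generality that the bottom factor $\mathcal{P}_1$ is the one with the UCTP and with $|\mathcal{P}_1| \geq 2$. If $\mathcal{P}_2$ is empty then $\mathcal{P}_2 * \mathcal{P}_1 = \mathcal{P}_1$, and unboundedness is already known for UCTP posets, so I may also assume $\mathcal{P}_2$ is non-empty.

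The remainder splits into two cases. If $\mathcal{P}_1$ has no greatest element and $\mathcal{P}_2$ has no least element, then Proposition~\ref{poset-point-poset} gives that every $\mathcal{P}_2 * \mathcal{P}_1$-saturated family is automatically $\mathcal{P}_2 * \bullet * \mathcal{P}_1$-saturated, so $\text{sat}^*(n, \mathcal{P}_2 * \mathcal{P}_1) \geq \text{sat}^*(n, \mathcal{P}_2 * \bullet * \mathcal{P}_1)$, and Proposition~\ref{glueinglemma} finishes the argument. Otherwise either $\mathcal{P}_2$ has a least element $q$ or $\mathcal{P}_1$ has a greatest element $p$. In the first situation I absorb $q$ into the middle point and write $\mathcal{P}_2 * \mathcal{P}_1 = (\mathcal{P}_2 \setminus \{q\}) * \bullet * \mathcal{P}_1$, which is of the form required by Proposition~\ref{glueinglemma}. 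In the second, I symmetrically write $\mathcal{P}_2 * \mathcal{P}_1 = \mathcal{P}_2 * \bullet * (\mathcal{P}_1 \setminus \{p\})$ and must check that $\mathcal{P}_1' := \mathcal{P}_1 \setminus \{p\}$ satisfies the hypotheses of Proposition~\ref{glueinglemma}.

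The only verifications requiring any care are in this last step. First, removing the greatest element of a UCTP poset preserves the UCTP, since for any covering pair $x \lessdot y$ in $\mathcal{P}_1'$ the twin witnessing UCTP inside $\mathcal{P}_1$ cannot have been $p$, which is comparable to both $x$ and $y$. Second, a two-element UCTP poset must be an antichain, because a two-element chain has a covering pair with no possible twin; hence $|\mathcal{P}_1| \geq 2$ together with the existence of a greatest element forces $|\mathcal{P}_1| \geq 3$, so that $|\mathcal{P}_1'| \geq 2$. Both observations already appear at the opening of the proof of Proposition~\ref{glueinglemma}, so no new idea is needed; the main obstacle is really the bookkeeping that certifies that we can always massage $\mathcal{P}_2 * \mathcal{P}_1$ into a form to which one of the two propositions applies.
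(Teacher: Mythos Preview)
Your proof is correct and follows essentially the same route as the paper: reduce by duality to the case where $\mathcal{P}_1$ carries the UCTP, peel off a unique extremal element (if one exists) to land in the $\mathcal{P}_2*\bullet*\mathcal{P}_1$ form of Proposition~\ref{glueinglemma}, and otherwise invoke Proposition~\ref{poset-point-poset}. Your write-up is simply more explicit about the case split and about why $|\mathcal{P}_1\setminus\{p\}|\geq 2$; the only minor inaccuracy is that the second of your ``two observations'' (that a two-element UCTP poset is an antichain) is not literally stated in the proof of Proposition~\ref{glueinglemma}, though an equivalent conclusion is derived there.
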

\begin{proof}
Since all properties are preserved by looking at the reversed posets, i.e. reversing all the relations and preserving all non-relations, we may assume without loss of generality that $\mathcal{P}_1$ has the UCTP and at least two elements. Furthermore, as noted in Lemma~\ref{UCTP}, if a poset has the UCTP and a unique maximal element, removing this maximal element results in a poset which also has the UCTP. Therefore we may assume that $\mathcal P_1$ does not have a unique maximal element, and consequently that $\mathcal P_2$ does not have a unique minimal element, as otherwise we would be done by Proposition~\ref{glueinglemma}. Thus, by Proposition~\ref{poset-point-poset}, any $\mathcal P_2*\mathcal P_1$-saturated set is also $\mathcal P_2*\bullet*\mathcal P_1$-saturated, which implies $\text{sat}^*(n,\mathcal P_2*\mathcal P_1)\geq \text{sat}^*(n,\mathcal P_2*\bullet*\mathcal P_1)$, which is unbounded, again by Proposition~\ref{glueinglemma}.
\end{proof}
We end the section with the following proposition, which tells us that, if $\mathcal P_2$ and $\mathcal P_1$ do not have a unique minimal element or a unique maximal element, respectively, then the saturation number of $\mathcal P_1*\mathcal P_2$ controls both the saturation number of $\mathcal P_2*\bullet$ and that of $\bullet*\mathcal P_1$. 
\begin{proposition}
\label{propositionmax}
Let $\mathcal P_1$ and $\mathcal P_2$ be any non-empty posets such that $\mathcal P_1$ does not have a unique maximal element and $\mathcal P_2$ does not have a unique minimal element. Then $\text{sat}^*(n,\mathcal P_2*\mathcal P_1)\geq\max\{\text{sat}^*(n,\mathcal P_2*\bullet), \text{sat}^*(n,\bullet*\mathcal P_1)\}$.
\begin{proof}
Let $\mathcal F$ be a $\mathcal P_2*\mathcal P_1$-saturated family with ground set $[n]$. By Proposition~\ref{poset-point-poset}, it is also $\mathcal P_2*\bullet*\mathcal P_1$-saturated.

Define $\mathcal A=\{A:A\text{ is below a copy of }\mathcal P_2\text{ in } \mathcal F\}$ and $\mathcal B=\{A:A\text{ is above a copy of }\mathcal P_1\text{ in } \mathcal F\}$. It is easy to see that a set $A$ cannot be both in $\mathcal A$ and $\mathcal B$, as it would be above a copy of $\mathcal P_1$ and below a copy of $\mathcal P_2$, giving a copy of $\mathcal P_2*\mathcal P_1$ in $\mathcal F$, a contradiction. Therefore, $\mathcal A\cap\mathcal B=\emptyset$. Moreover, for any set $A\notin \mathcal{F}$, there must be a copy of $\mathcal P_2*\mathcal P_1$ in $\mathcal F\cup\{A\}$, with $A$ being an element of that copy. Thus, depending on where $A$ is in that copy (upper or lower part), $A$ is either in $\mathcal A$ or in $\mathcal B$. Hence, $\mathcal F\cup\mathcal A\cup\mathcal B=\mathcal P([n])$.

We will show that $\mathcal F\setminus\mathcal B$ is $\bullet*\mathcal P_1$-saturated. This will imply that $|\mathcal F|\geq|\mathcal F\setminus\mathcal B|\geq\text{sat}^*(n,\bullet*\mathcal P_1)$, and consequently $\text{sat}^*(n,\mathcal P_2*\mathcal P_1)\geq\text{sat}^*(n,\bullet*\mathcal P_1)$. By symmetry, we will also have that $\mathcal F\setminus\mathcal A$ is $\mathcal P_2*\bullet$-saturated, which will finish the proof.

First, we note that $\mathcal{F} \setminus \mathcal{B}$ does not contain a copy of $\bullet * \mathcal{P}_1$, because if it did, its maximal element would be above a copy of $\mathcal{P}_1$, and thus in $\mathcal{B}$, a contradiction.

Now, let $A\notin\mathcal F\setminus\mathcal B$. Then either $A\in\mathcal A\setminus\mathcal F$, or $A\in\mathcal B$.

If $A\in\mathcal A\setminus \mathcal F$, then $\mathcal F\cup\{A\}$ contains a copy of $\mathcal P_2*\bullet*\mathcal P_1$ in which $A$ must appear. Note that $A$ must be in the $\mathcal P_1$ (the lower) part of this copy, as it is in $\mathcal A$ (otherwise it would be above a copy of $\mathcal P_1$ in $\mathcal F$, and thus would be an element of $\mathcal B$). But we are now done, since we have obtained a copy of $\bullet*\mathcal P_1$ which is in $\mathcal F\cup\{A\}$, and is below a copy of $\mathcal P_2$, thus it is in $\mathcal F\cap\mathcal A\cup\{A\}\subseteq\mathcal F\setminus \mathcal B\cup\{A\}$.

If $A\in\mathcal B$, then $A$ is above a copy of $\mathcal P_1$ in $\mathcal F$. For all such copies, we take the one with $|\cup_{X\in\mathcal P_1}X|$ minimal. For convenience, call it $\mathcal P_1$. To finish the proof, it is enough to show that $\mathcal P_1$ is in $\mathcal F\setminus\mathcal B$. We already know that all its elements are in $\mathcal F$, so assume that there exists $Y\in\mathcal P_1$ such that $Y\in\mathcal B$. By definition, $Y$ is above another copy of $\mathcal P_1$, call it $\widehat{\mathcal P_1}$ (which is also below $A$ by transitivity). By minimality, we must have $|\cup_{X\in\mathcal P_1}X|\leq|\cup_{X\in\widehat{\mathcal P_1}}X|$. However, $|\cup_{X\in\widehat{P_1}}X|\leq|Y|<|\cup_{X\in\mathcal P_1}X|$, where the last inequality is strict because $\mathcal P_1$ does not have a unique maximal element. This is a contradiction, so $\mathcal F\setminus\mathcal B$ is $\bullet*\mathcal P_1$-saturated.
\end{proof}
\end{proposition}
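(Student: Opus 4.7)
My plan is to start with a $\mathcal P_2*\mathcal P_1$-saturated family $\mathcal F\subseteq\mathcal P([n])$ and extract from it two subfamilies, one that is $\bullet*\mathcal P_1$-saturated and one that is $\mathcal P_2*\bullet$-saturated. The two constructions are mirror images under swapping the roles of $\mathcal P_1$ and $\mathcal P_2$ (together with \emph{above}/\emph{below} and unions/intersections), so I would only write out the $\bullet*\mathcal P_1$ case in detail. The key enabling fact is Proposition~\ref{poset-point-poset}: since $\mathcal P_1$ has no greatest and $\mathcal P_2$ has no least element, $\mathcal F$ is automatically also $\mathcal P_2*\bullet*\mathcal P_1$-saturated, so I may freely produce copies of the glued-with-a-middle-point poset whenever a set is added to $\mathcal F$.

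I would then introduce the two natural roles a set can play: let $\mathcal A$ be the family of sets below some induced copy of $\mathcal P_2$ in $\mathcal F$, and $\mathcal B$ the family of sets above some induced copy of $\mathcal P_1$ in $\mathcal F$. Two properties follow at once: first, $\mathcal A\cap\mathcal B=\emptyset$, because an element of the intersection would glue its witnessing copies into a $\mathcal P_2*\mathcal P_1$ inside $\mathcal F$; second, every set outside $\mathcal F$ lies in $\mathcal A\cup\mathcal B$, since the $\mathcal P_2*\mathcal P_1$ produced upon adding it places it in either the bottom or the top part. The target claim is that $\mathcal F\setminus\mathcal B$ is $\bullet*\mathcal P_1$-saturated. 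Freeness is immediate, as the apex of any such copy inside $\mathcal F\setminus\mathcal B$ would already lie in $\mathcal B$.

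For saturation, I fix $A\notin\mathcal F\setminus\mathcal B$ and split on whether $A\in\mathcal A\setminus\mathcal F$ or $A\in\mathcal B$. In the first case I apply the $\mathcal P_2*\bullet*\mathcal P_1$-saturation of $\mathcal F$ at $A$: the new set cannot land in the $\mathcal P_2$ or middle part of the resulting copy, since then it would be above a copy of $\mathcal P_1\subseteq\mathcal F$ and so lie in $\mathcal B$, contradicting $\mathcal A\cap\mathcal B=\emptyset$. Thus $A$ sits in the $\mathcal P_1$ part, and the middle together with that $\mathcal P_1$ yields the required $\bullet*\mathcal P_1$ copy; the remaining elements of $\mathcal P_1$ are below the upper $\mathcal P_2$, hence in $\mathcal A$ and outside $\mathcal B$, while the middle is sandwiched between $\mathcal P_2\subseteq\mathcal F$ above and any hypothetical witnessing $\mathcal P_1'\subseteq\mathcal F$ below, which would again force a $\mathcal P_2*\mathcal P_1'$ in $\mathcal F$, so the middle is outside $\mathcal B$ too.

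The delicate case, and the one I expect to be the main obstacle, is $A\in\mathcal B$. Here $A$ sits above some copy of $\mathcal P_1$ in $\mathcal F$, giving an obvious candidate $\bullet*\mathcal P_1$ with $A$ on top, but I must guarantee that its base $\mathcal P_1$ avoids $\mathcal B$. My plan is a minimality trick: among all copies of $\mathcal P_1$ in $\mathcal F$ below $A$, pick one, call it $\widetilde{\mathcal P_1}$, minimising $\bigl|\bigcup_{X\in\widetilde{\mathcal P_1}}X\bigr|$. If some $Y\in\widetilde{\mathcal P_1}$ were in $\mathcal B$, then $Y$ would be above another copy $\widehat{\mathcal P_1}\subseteq\mathcal F$ which, by transitivity, remains below $A$; then $|\bigcup\widehat{\mathcal P_1}|\leq|Y|<|\bigcup\widetilde{\mathcal P_1}|$, contradicting minimality. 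The strict right inequality is precisely where the hypothesis that $\mathcal P_1$ has no unique maximal element is used: it rules out the degenerate possibility that one element of $\widetilde{\mathcal P_1}$ contains all the others. The symmetric argument on $\mathcal F\setminus\mathcal A$ for $\mathcal P_2*\bullet$ then completes the proof.
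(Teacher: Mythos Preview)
Your proposal is correct and follows essentially the same approach as the paper: the same families $\mathcal A$ and $\mathcal B$, the same target subfamily $\mathcal F\setminus\mathcal B$, the same case split, and the same minimality argument on $\bigl|\bigcup_{X\in\widetilde{\mathcal P_1}}X\bigr|$ for the $A\in\mathcal B$ case. The only cosmetic difference is that in the $A\in\mathcal A\setminus\mathcal F$ case you argue separately that the middle point avoids $\mathcal B$, whereas the paper handles the whole $\bullet*\mathcal P_1$ block at once by noting it lies below a copy of $\mathcal P_2$ and hence inside $\mathcal A$.
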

\section{Gluing special posets}
We now turn our attention to a certain class of posets: we call a poset $\mathcal P$ \textit{special} if its Hasse diagram is comprised of a poset $\mathcal P'$ with both a unique minimal and a unique maximal element, and another single element with no relations to $\mathcal P'$, as illustrated in the diagram below.
\begin{center}
    \includegraphics[width=5cm]{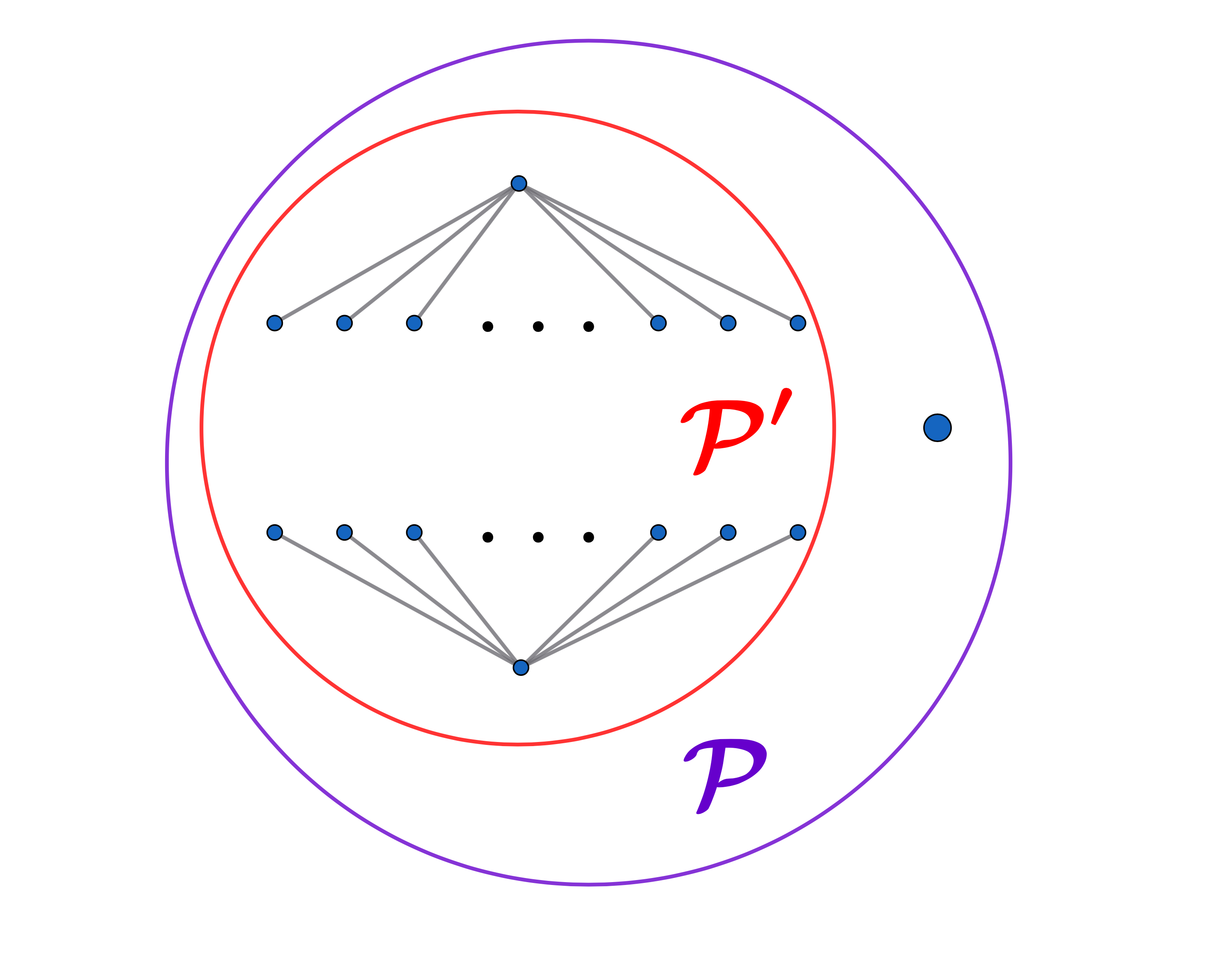}
\end{center}

We show that if we glue two special posets, the saturation number of the resulting poset is controlled above by the saturation numbers of the original posets. In particular, this means that by gluing two special posets with bounded saturation number we obtain a poset with bounded saturation number. We also establish linear upper bounds for all special posets.

\begin{theorem} Let $\mathcal P$ be a special poset. Then $\text{sat}^*(n,\mathcal P)=O(n)$.
    \label{special-linear}
\end{theorem}
\begin{proof}
Let $\mathcal P'$ denote the poset obtained from $\mathcal P$ by removing the element that is incomparable to everything. Let $k$ be the smallest natural number such that $\mathcal P([k])$ contains an induced copy of $\mathcal P'$. Let $h$ be the smallest natural number such that $\mathcal P([h])$ contains an induced copy of $\mathcal P$. We will show that $\text{sat}^*(n,\mathcal P)\leq 2^k+\text{sat}^*(n-k-1,\mathcal P)$ for every $n$, which iterated gives a linear bound indeed.

Since $\mathcal P'$ has both a unique maximal and a unique minimal element, the copy of $\mathcal P'$ in $\mathcal P([k])$ must have these two elements represented by $[k]$ and $\emptyset$ respectively. In particular, $\mathcal P ([k])$ does not contain an induced copy of $\mathcal P$. We now observe that if we add the element $k+1$ to all sets of this copy, together with the singleton $\{k+2\}$, we obtain a copy of $\mathcal P$ in $\mathcal P([k+2])$. This gives that $k+1\leq h\leq k+2$.

Suppose that $\mathcal P([n])$ contains an induced copy of $\mathcal P$. Let $A$ be the unique minimal element and $B$ the unique maximal element of $\mathcal P'$. By definition we must have $|B\setminus A|\geq k$. Let $C$ be the element that is incomparable to everything in this copy of $\mathcal P$. Since $C$ is incomparable to both $A$ and $B$, $A$ cannot be the empty set, and $B$ cannot be $[n]$, thus $n\geq |B|+1\geq k+|A|+1\geq k+2$. Therefore we must have $h=k+2$.

Let $n$ be a natural number and let $\mathcal F'$ be a $\mathcal P$-saturated family with ground set $\{k+2,\dots,n\}$. We define $\mathcal F$ to be $\mathcal P([k+1])\cup\{A\cup[k+1]:A\in\mathcal F'\}$. We claim that $\mathcal F$ is $\mathcal P$-saturated with ground set $[n]$. 

First, suppose that $\mathcal F$ contains an induced copy of $\mathcal P$. Since $h=k+2$ and $\{A\cup[k+1]:A\in\mathcal F'\}$ is isomorphic to $\mathcal F'$, hence $\mathcal P$-free, there must exist two elements of this copy, say $X$ and $Y$, such that $X\in\mathcal P([k+1])$ and $Y\in\{A\cup[k+1]:A\in\mathcal F'\}\setminus\mathcal P([k+1])$. By construction, this implies that $X\subset Y$, hence neither can be the element that is incomparable to everything, which we will call $Z$. But now, since $Z\in\mathcal F$, either $Z\in\mathcal P([k+1])$ in which case $Z\subset Y$, or $Z\in\{A\cup[k+1]:A\in\mathcal F'\}$ in which case $X\subset Z$, a contradiction. Therefore $\mathcal F$ is $\mathcal P$-free.

Finally, let $A\in\mathcal P([n])\setminus\mathcal F$. If $A=A'\cup[k+1]$ for some $A'\subseteq\{k+2,\dots,n\}$, then $A'\notin \mathcal F'$, otherwise $A\in \mathcal F$ by construction. This gives a copy of $\mathcal P$ in $\mathcal F'\cup\{A'\}$, which subsequently gives a copy of $\mathcal P$ in $\mathcal F\cup\{A\}$. If $A$ is not of that form, then there exists $i\in[k+1]$ such that $i\notin A$. Moreover, since $A\notin\mathcal F$, there exists $x\geq k+2$ such that $x\in A$. Let $\mathcal F''$ be the family $\{\{i\}\cup F:F\subseteq \mathcal P([k+1]\setminus\{i\})\}$. We observe that $\mathcal F''\subset\mathcal F$ and that it is isomorphic to $\mathcal P([k])$. Thus, it contains an induced copy of $\mathcal P'$. Since all the sets of this copy contain $i$ and do not contain $x$, this copy of $\mathcal P'$ together with $A$ gives an induced copy of $\mathcal P$ in $\mathcal F\cup\{A\}$. Therefore $\mathcal F$ is $\mathcal P$-saturated.

We therefore have that $|\mathcal F|\leq 2^k+|\mathcal F'|$. If we take $\mathcal F'$ to have minimal size, i.e. $|\mathcal F'|=\text{sat}^*(n-k-1,\mathcal P)$, we get the claimed inequality:
$$\text{sat}^*(n,\mathcal P)\leq 2^k+\text{sat}^*(n-k-1,\mathcal P).$$
\end{proof}
We remark that one could have also taken $\mathcal F$ to be $\mathcal F'\cup\{A\cup\{k+2,\dots,n\}:A\in\mathcal P([k+1])\}$, which will be useful later on in the section. 

Moreover, since the only restrictions on a special poset are the incomparable extra element and the unique maximal and minimal elements of the rest, we obtain the following corollary.
\begin{corollary}
\label{sub-poset}
    For any given finite poset $\mathcal P$, there exists a finite poset $\mathcal P^*$ such that $|\mathcal P^*|\leq|\mathcal P|+3$, $\mathcal P^*$ contains $\mathcal P$ as an induced sub-poset, and $sat^*(n,\mathcal P^*)=O(n)$.
\end{corollary}
We are now ready to prove the main result of this section, namely that if $\mathcal P_1$ and $\mathcal P_2$ are two special posets, then $\text{sat}^*(n, \mathcal P_2*\mathcal P_1)$ is bounded above by a function of $\text{sat}^*(n, \mathcal P_1)$ and $\text{sat}^*(n,\mathcal P_2)$, otherwise independent of $n$.

The idea of the proof is to start with a small family that is $\mathcal P_2*\mathcal P_1$-free and then extend it by looking at two special families, namely the sets in $\mathcal P([n])$ that are above a copy of $\mathcal P_1$ or below a copy of $\mathcal P_2$ (in the original family), which we will then carefully saturate using saturated families for $\mathcal P_1$ and $\mathcal P_2$. 
\begin{theorem}
\label{bounded}
Let $\mathcal P_1$ and $\mathcal P_2$ be two special posets. There exist constants $c_1, c_2$ and $c_3$ depending on $\mathcal P_1$ and $\mathcal P_2$ such that: $$\text{sat}^*(n,\mathcal P_2*\mathcal P_1)\leq c_1\text{sat}^*(n, \mathcal P_1)+c_2\text{sat}^*(n,\mathcal P_2)+c_3.$$ In particular, if both $\mathcal P_1$ and $\mathcal P_2$ have bounded saturation number, then so does $\mathcal P_2*\mathcal P_1$.
\end{theorem}
\begin{proof}
To start with, let $n$ be large (to be chosen later), $h_1$ be the smallest number such that $\mathcal P([h_1])$ contains an induced copy of $\bullet*\mathcal P_1$, and $h_2$ the smallest number such that $\mathcal P([h_2])$ contains a copy of $\bullet*\overline{\mathcal P_2}$, where $\overline{\mathcal P_2}$ is the poset obtained from $\mathcal P_2$ by reversing all relations (incomparability relations are the same). Crucially, since a special poset does not have a minimal or a maximal element, and the property of being special is invariant under reversing relations, $h_1$ is in fact the smallest number such that $\mathcal P([h_1])$ contains an induced copy of $\mathcal P_1$, and $h_2$ is the smallest number such that $\mathcal P([h_2])$ contains an induced copy of $\mathcal P_2$.  

Our starting family is:
$$\mathcal F_1= \binom{[h_1+h_2-1]}{\leq h_1}\cup \left\{[n]\setminus A : A\in\binom{[h_1+h_2-1]}{\leq h_2}\right\}.$$
\begin{claim2} $\mathcal F_1$ does not contain a copy of $\mathcal P_2*\mathcal P_1$.
\end{claim2}
\begin{proof} Suppose that such a copy exists. 

\noindent\textbf{Case 1.} All sets in the $\mathcal P_1$ part are in $\binom{[h_1+h_2-1]}{\leq h_1}$.

Let $X\subseteq [h_1+h_2-1]$ be their union (which is not in $\mathcal P_1$ as $\mathcal P_1$ does not have a unique maximal element). We therefore have an induced copy of $\mathcal{P}_1$ in $\mathcal{P}(X)$, so $|X|\geq h_1$. Moreover, $X$ is a strict subset of all the sets that make up the $\mathcal P_2$ part of this copy ($\mathcal P_2$ does not have a unique minimal element). This implies that this copy of $\mathcal P_2$ lives in $\left\{[n]\setminus A:A\in\binom{[h_1+h_2-1]}{\leq h_2}\right\}$. Looking at these sets and their intersection, and then taking complements, we see that we must have a copy of $\bullet*\overline{\mathcal P_2}$ in some $\mathcal P([n_0])$ for some $n_0\leq h_2-1$, which is a contradiction. 

\noindent\textbf{Case 2.} At least one element of the $\mathcal P_1$ part of the copy is in $\left\{[n]\setminus A:A\in\binom{[h_1+h_2-1]}{\leq h_2}\right\}$.

This implies that the entire $\mathcal P_2$ part is in $\left\{[n]\setminus A:A\in\binom{[h_1+h_2-1]}{\leq h_2}\right\}$, which by duality with the previous case, is impossible. 
\end{proof}

Let us now define $C_1 = \{A \in \mathcal P([n]) : A \text{ is above a copy of }\mathcal P_1\text{ in } \mathcal F_1\}$, and $C_2=\{A \in \mathcal P([n]): A\text{ is below a copy of }\mathcal P_2\text{ in }\mathcal F_1\}$. It is straightforward to see, and sometimes convenient, that \break${C_1=\left\{A \in \mathcal P([n]) : \exists B\in \binom{[h_1+h_2-1]}{ h_1}, B\subseteq A\right\}}$ and $C_2=\left\{A:\exists B\in\binom{h_1+h_2-1}{h_2}, B\subseteq [n]\setminus A\right\}$.

For two families $\mathcal A\subset\mathcal B\subseteq\mathcal P([n])$ and a poset $\mathcal G$, we say that \textit{$\mathcal A$ is $\mathcal G$-saturated for $\mathcal B$}, if $\mathcal A$ does not contain an induced copy of $\mathcal G$, but for any $X\in\mathcal B\setminus\mathcal A$, $\mathcal A\cup\{X\}$ contains an induced copy of $\mathcal G$.

\begin{claim2} Let $\mathcal F$ be a family that extends $\mathcal F_1$ in such a way that $C_1\cap \mathcal F$ is  $\mathcal P_2*\bullet$-saturated for $C_1$, and $C_2\cap \mathcal F$ is $\bullet*\mathcal P_1$-saturated for $C_2$. Then $\mathcal F$ is $\mathcal P_2*\mathcal P_1$-saturated.
\end{claim2}

\begin{proof}
We begin by noticing that $C_1\cup C_2\cup \mathcal F_1 = \mathcal P([n])$. This is because if we have a set $X\notin\mathcal F_1$ such that no subset of $[h_1+h_2-1]$ of size $h_1$ is a subset of $X$, then $X=X'\cup X''$ where $X'\subseteq [h_1+h_2-1]$, $|X'|\leq h_1-1$ and $X''\cap[h_1+h_1-1]=\emptyset$. Thus $X\subseteq X'\cup\{h_1+h_2,\dots,n\}$, which implies that $[h_1+h_2-1]\setminus X'\subseteq [n]\setminus X$, and so $X\in C_2$.

We first show that $\mathcal F$ does not contain a copy of $\mathcal P_2*\mathcal P_1$.
Suppose such a copy exists in $\mathcal F$, and call it $\mathcal P_2*\mathcal P_1$ for simplicity. Let $A = \cap_{B \in \mathcal P_2} B$. Notice that $A\notin C_1$. Indeed, if $A\in C_1$, then $A$ has a copy of $\mathcal P_1$ below it. In particular, any $B\in\mathcal P_2$ has the same copy below it, so $B\in C_1$ for all $B\in\mathcal P_2$. Finally, as $A\in C_1$, $A'\subseteq A$ for some $A'\in\binom{[h_1+h_2-1]}{h_1}$. This means that $\mathcal P_2$, together with $A'$, forms a copy of $\mathcal P_2*\bullet$ in $\mathcal F\cap C_1$, a contradiction.

Similarly $C\notin C_2$, where $C= \cup_{B\in \mathcal P_1} B$. Since $C\subseteq A$, we must have that $C\not\in C_1$ and $B\not\in C_2$. Therefore $A,C\in \mathcal F_1\setminus(C_1\cup C_2)$. If $C\subseteq [h_1+h_2-1]$, then the copy of $\mathcal P_2$ that is below it must be also comprised of subsets of $[h_1+h_2-1]$, thus $|C|\geq h_1$, which implies that $C\in C_1$, a contradiction. Similarly $[n]\setminus A$ is not a subset of $[h_1+h_2-1]$. This is impossible since $A$ and $C$ are in $\mathcal F_1$ and $C\subseteq A$.

 Finally, let $X\notin\mathcal F$. In particular $X\notin\mathcal F_1$. Without loss of generality we may assume that $X\in C_1$. Since $\mathcal F\cap C_1$ is $\mathcal P_2*\bullet$-saturated for $C_2$, we must have a copy of $\mathcal P_2*\bullet$ in $\mathcal F\cap C_1\cup\{X\}$ which contains $X$. Note that $X$ cannot be the minimal point of this poset since $X\in C_1$ which means that $C_1$ has a copy of $\mathcal P_1$ in $\mathcal F_1$ below it, which together with the copy of $\mathcal P_2$ above it forms a copy of $\mathcal P_2*\mathcal P_1$ in $\mathcal F$, a contradiction. Nevertheless, the minimal point of this copy is in $C_1$, thus it has a copy of $\mathcal P_1$ in $\mathcal F_1$ below it, which together with the copy of $\mathcal P_2$ that $X$ is part of, gives us a copy of $\mathcal P_2*\mathcal P_1$ that $X$ is part of. This shows that $\mathcal F$ is $\mathcal P_2*\mathcal P_1$-saturated. 
\end{proof}

Let $\mathcal F_0$ be a minimal $\mathcal P_2$-saturated family with ground set $\{h_1+h_2,\dots,n\}$. Now, for a fixed $A\in\binom{h_1+h_2-1}{h_1}$, we  observe that the sets in $\mathcal F_1$ strictly above $A$ are all sets of the form $\{h_1+h_2,\dots,n\}\cup A\cup X$, for all $X\subseteq[h_1+h_2-1]\setminus A$. Denote by $\overline{A}$ the set $[h_1+h_2-1]\setminus A$. Consider the family $\mathcal F_A=\mathcal F_0\cup\{X\cup\{h_1+h_2,\dots,n\}:X\subseteq\overline{A}\}$, which is $\mathcal P_2$-saturated with ground set $\overline{A}\cup\{h_1+h_2,\dots,n\}$. For every such $A$, we add to $\mathcal F_1$ all sets of the form $A\cup F$, where $F\in\mathcal F_A$.  Call this family $\mathcal F'$. 

By construction, for a given $A\in\binom{[h_1+h_2-1]}{h_1}$, the only sets in $\mathcal F'$ above $A$ are $A\cup F$ for $F\in\mathcal F_A$. This is also because if $A,B\in\binom{[h_1+h_2-1]}{h_1}$, $A\neq B$ and $A\subseteq B\cup F$ for some $F\in\mathcal F_B$, then $F\notin\mathcal F_0$, so $A\subset B\cup F=B\cup X\cup\{h_1+h_2,\dots,n\}$ for some $X\in\overline{B}$. Therefore we have that $A\subseteq B\cup X$. Let $Y=(B\cup X)\setminus A$. Then $Y\in\overline{A}$ and $B\cup F=B\cup X\cup\{h_1+h_2,\dots,n\}=A\cup Y\cup\{h_1+h_2,\dots,n\}=A\cup F'$, where $F'\in\mathcal F_A$. Moreover, we have added at most $\binom{h_1+h_2-1}{h_1}(2^{h_2-1}+\text{sat}^*(n-h_1-1,\mathcal P_2))\leq\binom{h_1+h_2-1}{h_1}(2^{h_2-1}+\text{sat}^*(n,\mathcal P_2))$ elements to $\mathcal F_1$. Note also that everything we have added is in $C_1$, hence $\mathcal F'\cap C_2=\mathcal F_1\cap C_2$.

In exactly the same way, using now the fact that $\mathcal P_1$ is a special poset, we extend $\mathcal F'$ in $C_2$ only, leaving the $C_1$ part intact. We call this final family $\mathcal F$. In light of Claim 1, and the fact that $|\mathcal F|\leq |\mathcal F_1|+\binom{h_1+h_2-1}{h_1}(2^{h_2-1}+\text{sat}^*(n,\mathcal P_2))+\binom{h_1+h_2-1}{h_2}(2^{h_1-1}+\text{sat}^*(n,\mathcal P_1))=c_1\text{sat}^*(n, \mathcal P_1)+c_2\text{sat}^*(n,\mathcal P_2)+c_3$, it is now enough to prove the claim below (the final condition in Claim 1 is completely analogous).

\begin{claim2}
$\mathcal F\cap C_1$ is $\mathcal P_2*\bullet$-saturated for $C_1$.
\end{claim2}
\begin{proof}
First, suppose that $\mathcal F\cap C_1$ contains a copy of $\mathcal P_2*\bullet$. Let $A$ be the minimal element of this copy, which, without loss of generality, can be assumed to be in $\binom{[h_1+h_2-1]}{h_1}$. This means that $A$ is below a copy of $\mathcal P_2$. Since, by construction, the only elements of $\mathcal F\cap C_1$ above $A$ are of the form $A\cup F$ where $F\in\mathcal F_A$, $\mathcal F_A$ must contain a copy of $\mathcal P_2$. This is a contradiction since this family was chosen to be $\mathcal P_2$-saturated for all such $A$. Hence $\mathcal F\cap C_1$ is $\mathcal P_2*\bullet$-free.

Finally, let $B\in C_1\setminus(\mathcal F\cap C_1)$. Since $B$ is in $C_1$, there exists some $A\in \binom{[h_1+h_2-1]}{h_1}$ such that $A\subset B$. Note in particular that $A\in\mathcal F\cap C_1$. Let $C=B\setminus A$ and observe that $C\notin\mathcal F_A$ as otherwise $B=A\cup C\in\mathcal F\cap C_1$, a contradiction. Therefore $\mathcal F_A\cup\{C\}$ contains a copy of $\mathcal P_2$, thus $\{A\cup F:F\in\mathcal F_A\}\cup\{B\}$ contains a copy of $\mathcal P_2$, which together with $A$ gives a copy of $\mathcal P_2*\bullet$ in $(\mathcal F\cap C_1)\cup\{B\}$, finishing the proof of the claim.
\end{proof}
\end{proof}
We remark that there exists an infinite family of special posets with bounded saturation number as shown in \cite{keszegh2021induced} (Theorem 3.8), namely the ones where $\mathcal P'$ is a chain of length at least 2. Thus Theorem~\ref{bounded} tells us that gluing any two such posets will result in a poset with bounded saturation number.
\section{Linear upper bound for $k$-layered complete posets}
Let $K_{n_1,\dots,n_k}$ denote the complete poset with $k$ layers of sizes $n_1,\dots,n_k$, in this order, where $n_1$ is the size of the bottom layer. The Hasse diagram is displayed below.
\begin{center}
\includegraphics[width=8.5cm]{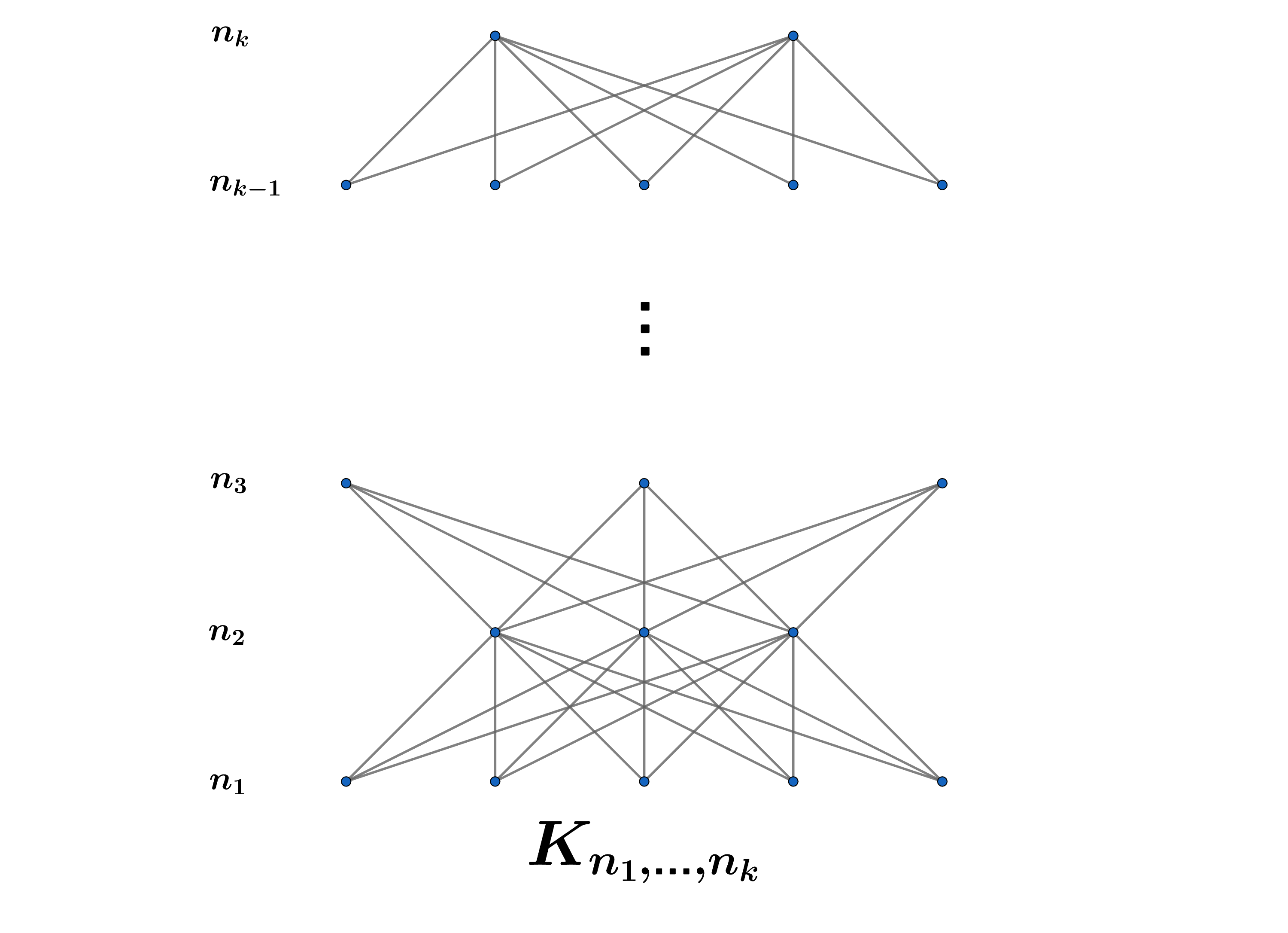}
\end{center}
In this section we establish linear upper bounds for all such posets that do not have two consecutive layers of size 1, i.e. there is no $i$ such that $n_i=n_{i+1}=1$. In light of Proposition~\ref{poset-point-poset}, it is enough to establish such an upper bound in the case $n_i\geq2$ for all $i$. The overview of the proof is as follows. We start with a family $\mathcal{F}_1\cup\mathcal{F}_2$ of subsets of $[n]$ of bounded size that is $K_{n_1,\dots,n_k}$-free. Moreover, $\mathcal F_1$ and $\mathcal F_2$ live in two disjoint and somehow incompatible copies of $Q_d$ (for some carefully chosen $d$), and they are each a union of $k-1$ layers of these hypercubes. We then show that regardless of how we saturate this family, we end up with a $K_{n_1,\dots,n_k}$-saturated family of size at most linear in $n$.

The idea behind the construction is that the maximum number of layers we can take in a hypercube without creating a chain of size $k$ is $k-1$, and then, in order to still ensure this property, the other hypercube has to be more or less the complement of the previous. This stops us, once we `exit' the first family, to reach the second family as we have been `pushed' too high.
\begin{theorem}\label{k-partite theorem}
Let $n_1,\dots,n_k\geq 2$ . Then, $sat^*(n, K_{n_1,\dots,n_k}) = O(n)$.
\end{theorem}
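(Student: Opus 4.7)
The plan is to construct an explicit $K_{n_1,\ldots,n_k}$-saturated family of size $O(n)$, having (by Proposition~\ref{poset-point-poset}) reduced to the case $n_i \geq 2$ for every $i$. The construction will have a constant-size ``core'' free family together with an $O(n)$-size collection of ``bridge'' sets that together achieve saturation.

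For a large constant $d = d(k, n_1, \ldots, n_k)$, pick disjoint $A, B \subseteq [n]$ each of size $d$ and set
\[
\mathcal{F}_1 = \binom{A}{\leq k-2}, \qquad \mathcal{F}_2 = \left\{[n] \setminus S : S \in \binom{B}{\leq k-2}\right\}.
\]
Each is a union of $k-1$ layers of a sub-hypercube of dimension $d$; together they form the core. For $n$ large, every element of $\mathcal{F}_1$ is properly contained in every element of $\mathcal{F}_2$, so a single layer (an antichain of size at least $2$) of any hypothetical induced $K_{n_1,\ldots,n_k}$-copy cannot be split between $\mathcal{F}_1$ and $\mathcal{F}_2$ --- splitting would force a strict comparison between two would-be incomparable sets. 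The $k$ layers of the copy therefore distribute as $m$ in $\mathcal{F}_1$ and $k-m$ in $\mathcal{F}_2$.

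I would then argue that $\binom{A}{\leq k-2}$ cannot accommodate $K_{n_1,\ldots,n_p}$ with all $n_i \geq 2$ unless $p \leq \lfloor (k-1)/2 \rfloor$, via an inductive argument on the union $U_i$ of the sets chosen in each successive layer. If the $i$-th layer uses size $j_i$ with at least two distinct sets, then $|U_i| \geq j_i + 1$; for the $(i{+}1)$-st layer to contain two distinct sets of size $j_{i+1}$ all containing $U_i$, one needs $j_{i+1} \geq |U_i| + 1 \geq j_i + 2$, giving $j_p \geq 2p - 1$, and $j_p \leq k-2$ forces $p \leq (k-1)/2$. By symmetry the same bound holds for $\mathcal{F}_2$. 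Since both $m$ and $k-m$ would be at most $\lfloor(k-1)/2\rfloor$ but sum to $k$, the split is impossible, so the core is $K_{n_1,\ldots,n_k}$-free.

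The linear extension will be built by adding ``bridge'' sets indexed by the elements $i \in [n] \setminus (A \cup B)$ --- for instance, a bounded number of sets per $i$ of the form $X \cup \{i\}$ for $X$ in the top layer of $\mathcal{F}_1$, and the dual construction inside $\mathcal{F}_2$ --- giving $O(n)$ additions. The main obstacle is the saturation check: for every $X$ outside the resulting family $\mathcal{F}$ one must exhibit an induced $K_{n_1,\ldots,n_k}$-copy in $\mathcal{F} \cup \{X\}$, while simultaneously verifying that $\mathcal{F}$ itself remains $K$-free in the presence of the bridges. I expect to handle this by a case analysis on the triple $(X \cap A,\, X \cap B,\, X \cap ([n] \setminus (A \cup B)))$; in each case one must assemble the $\sum_j n_j - 1$ remaining vertices of the $K$-copy from core and bridge sets, relying on the abundance of incomparable sets within each layer (guaranteed by the assumption $n_i \geq 2$ together with $d$ being chosen large enough). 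Calibrating $d$ and the exact family of bridges so that every case of this analysis goes through is the delicate and lengthy part of the argument.
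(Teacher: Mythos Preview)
Your freeness argument for the core $\mathcal F_1\cup\mathcal F_2$ is essentially correct: once a layer cannot straddle $\mathcal F_1$ and $\mathcal F_2$, the union-growth bound $|U_{i+1}|\ge|U_i|+2$ does force $m,k-m\le(k-1)/2$, which is impossible.

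The genuine gap is everything after that. You propose to \emph{explicitly} extend the core by $O(n)$ ``bridge'' sets and then verify saturation by case analysis, but neither step is carried out, and the sketch you give already fails. For $k=2$ your core is $\{\emptyset,[n]\}$ and your suggested bridges are the singletons and co-singletons on $[n]\setminus(A\cup B)$; then $\{i\},\{j\},[n]\setminus\{k\},[n]\setminus\{l\}$ (for distinct $i,j,k,l\notin A\cup B$) is an induced $K_{2,2}$, so the family is not even $K$-free. More generally, once you insert sets that interpolate between the low and high cubes, you have to re-prove freeness from scratch, and then still handle the saturation case analysis you yourself flag as ``delicate and lengthy''. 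Nothing in your outline indicates how either of these would go.

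The paper sidesteps this entirely. It never constructs a saturated family. Instead it designs a more refined $K$-free core---not the first $k-1$ layers of a small cube, but $k-1$ \emph{specific} layers at heights $h_1,\ h_1+x_1+h_2,\ \ldots$ tuned to the $n_i$---and then proves that \emph{every} saturated family $\mathcal F$ containing this core has $|\mathcal F|=O(n)$. The mechanism is a Dilworth argument: for each pair $R\subseteq S$ of sets sitting between consecutive core layers, the slice $F_{R,S}=\{F\in\mathcal F: R\subseteq F\cap[d]\subseteq S\}$ can contain no antichain of size $n_i$, since such an antichain together with the core layers below $R$ and above $S$ would assemble a full $K_{n_1,\ldots,n_k}$. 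Hence $\mathcal F$ is covered by a bounded number of chains, and a chain in $\mathcal P([n])$ has at most $n+1$ elements. The carefully spaced layer heights are exactly what makes this antichain bound hold; your uniform core $\binom{A}{\le k-2}$ does not have the right structure to run this argument, so even borrowing the paper's idea would require redesigning your core.
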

\begin{proof}
Before exhibiting the starting family, we need some notation. For a positive integer $n\geq2$, let $w(n)$ denote the smallest integer such that $\mathcal P([w(n)])$ contains an antichain of size $n$. In other words, $w(n)$ is the unique integer satisfying $\binom{w(n)}{\left\lfloor (w(n)/2)\right\rfloor}\geq n > \binom{\, w(n)-1}{\left\lfloor(w(n)-1)/2\right\rfloor}$. We also define $h(n)= \left\lfloor \frac{w(n)-1}{2}\right\rfloor$, and let $x(n)$ the smallest integer satisfying $\binom{x(n)+h(n)}{h(n)}\geq n$.

With this notation, given $n_1,\dots,n_k$ the sizes of the layers of our poset, we denote by $w_i=w(n_i), h_i=h(n_i)$ and $x_i=x(n_i)$ for all $1\leq i\leq k$. Finally, let $d=\sum_{i=1}^{k}{(h_i+x_i)}-1$. We are now ready to define our family $\mathcal F_1\cup\mathcal F_2$ such that $\mathcal F_1$ lives in a `low' copy of $Q_d$, while $\mathcal F_2$ lives in a disjoint `high' copy of $Q_d$. From each copy we will take a collection of full layers with spacing controlled by $h_i$'s and $x_i$'s. More precisely:
$$\mathcal F_1= \binom{[d]}{h_1}\cup \binom{[d]}{h_1+x_1+h_2}\cup\dots\cup \binom{[d]}{\sum_{i=1}^{k-3}(h_i+x_i)+h_{k-2}}\cup\binom{[d]}{\sum_{i=1}^{k-2}(h_i+x_i)+h_{k-1}},$$
and
$$\mathcal F_2=\left\{[n]\setminus A : A \in \binom{[d]}{h_k}\cup \binom{[d]}{h_k+x_k+h_{k-1}}\cup\dots\cup\binom{[d]}{\sum_{i=1}^{k-2}(h_{k+1-i}+x_{k+1-i})+h_{2}}\right\}.$$
The two families are represented in red in the figure below.
\begin{center}
    \includegraphics[width=10cm]{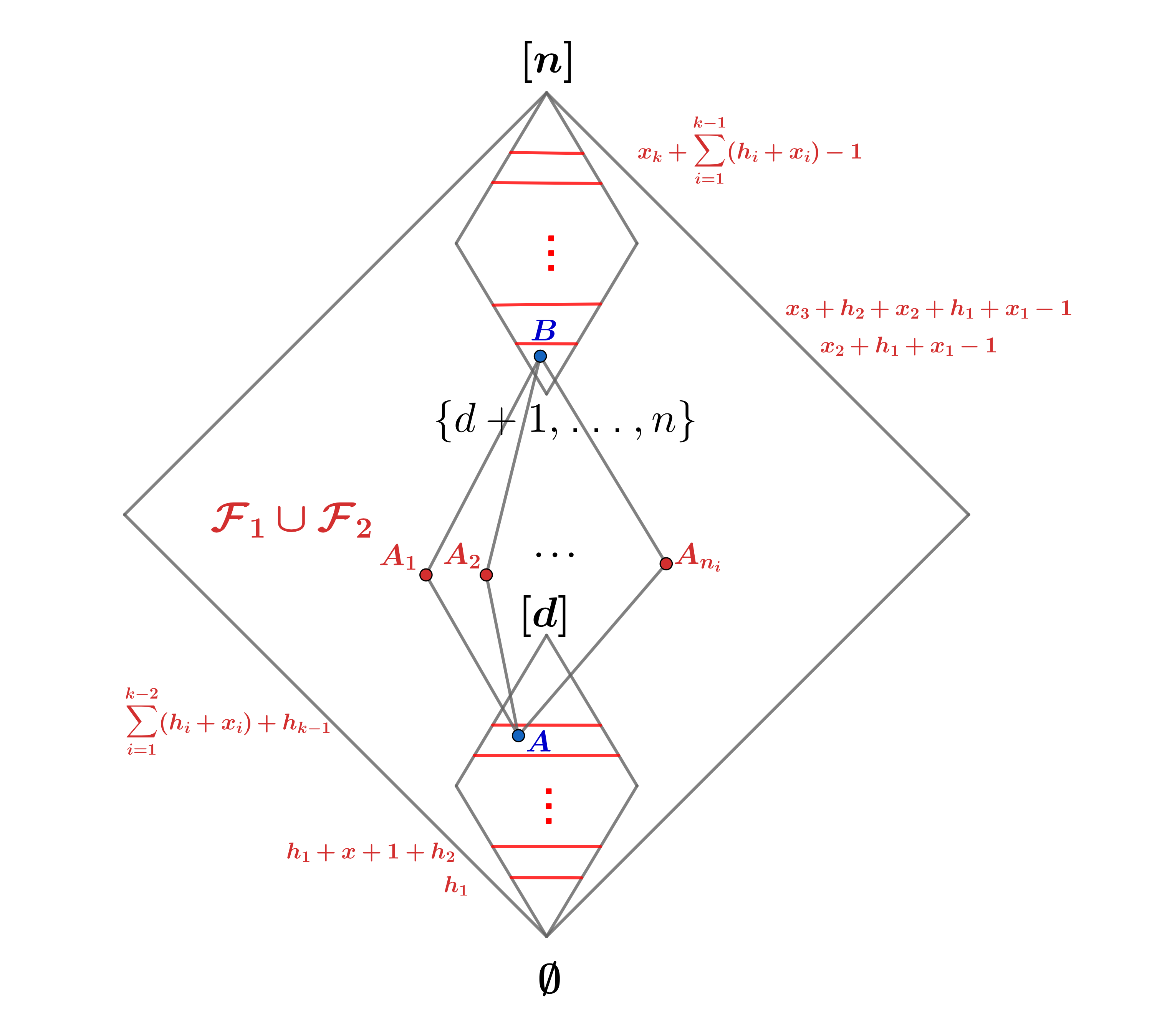}
\end{center}
\begin{proposition}
Let $\mathcal F$ be a $K_{n_1,\dots n_k}$-saturated family of sets of $[n]$ that contains $\mathcal F_1\cup\mathcal F_2$. Then $|\mathcal F|=O(n)$.
\end{proposition}
\begin{proof}
We start by defining $C_T$ to be the collection of sets in $\mathcal{F}$ that are exactly $T$ on $[d]$. In other words $C_T=\{F \in\mathcal F : F \cap [d] = T\}$, for all $T\subseteq [d]$.

Let $R\subseteq S$ be two sets such that $R\in\binom{[d]}{\sum_{j=1}^{i-1} (h_j+x_j)}$ and $S\in \binom{[d]}{\sum_{j=1}^{i} (h_j+x_j)-1}$ for some $i\in[k]$. We define $F_{R,S}=\cup_{R\subseteq T \subseteq S} C_T$. We now observe that the maximum size of an antichain in $F_{R,S}$ is at most $n_i-1$, thus, by Dilworth, $F_{R,S}$ is contained in $n_i-1$ chains.

Indeed, suppose $F_{R,S}$ contains an antichain of size $n_i$, and let $Y_1,\dots,Y_{n_i}$ be $n_i$ pairwise incomparable elements in $F_{R,S}$. Then $R\subseteq Y_1,\dots,Y_{n_i}\subseteq S\cup\{d+1,\dots,n\}$. However, by construction, $\mathcal P(R)\cap \mathcal F_1$ contains a copy  of $K_{n_1,\dots,n_{i-1}}$ below $R$. Also, by symmetry, there is a copy of $K_{n_{i+1},\dots,n_k}$ above $S\cup\{d+1,\dots,n\}$. These two copies, together with the antichain $Y_1,\dots, Y_{n_i}$ form a copy of $K_{n_1,\dots,n_k}$, a contradiction.

To finish the proof, we see that $\bigcup_{T\subseteq [d]}C_T = \mathcal{F}$. Since every $T\in[d]$ is between two sets $R\subseteq S$ with $R\in\binom{[d]}{\sum_{j=1}^{i-1} (h_j+x_j)}$ and $S\in \binom{[d]}{\sum_{j=1}^{i} (h_j+x_j)-1}$ for some $i\in[k]$, $$\bigcup _{T\subseteq [d]}C_T=\bigcup_{1\leq i \leq k}\bigcup_{(R,S)\in\mathcal A_i}F_{R,S},$$ where $\mathcal A_i=\left\{(X,Y):X\subseteq Y, X\in \binom{[d]}{\sum_{j=1}^{i-1} (h_j+x_j)}, Y\in \binom{[d]}{\sum_{j=1}^{i} (h_j+x_j)-1}\right\}.$

Therefore $\mathcal{F}$ is contained in the union of at most $\lambda$ chains, where $$\lambda=\sum_{i=1}^k (n_i-1)\binom{d}{\sum_{j=1}^{i-1} (h_j+x_j)}\binom{d -\sum_{j=1}^{i-1} (h_j+x_j) }{h_i+x_i-1},$$ and so $|\mathcal F|\leq \lambda (n-1)+2$.
\end{proof}
\begin{proposition}
The family $\mathcal{F}_1\cup\mathcal F_2$ is $K_{n_1,\dots,n_k}$-free.
\end{proposition}
\begin{proof}
Suppose $\mathcal F_1\cup \mathcal F_2$ contains such a copy, which, for simplicity, we call $K_{n_1,\dots,n_k}$. Since the longest chain that is entirely contained in one of $\mathcal F_1$ and $\mathcal F_2$ has size $k-1$, our copy must intersect both $\mathcal F_1$ and $\mathcal F_2$.

Let $i$ be the smallest integer such that the $i^{\text{th}}$ layer of $K_{n_1,\dots,n_k}$ contains an element of $\mathcal F_2$ (such an $i$ exists by the above observation). Observe first that $i$ cannot be 1 as this would imply $\mathcal F_2$ contains a chain of size $k$, a contradiction. Thus $1<i\leq k$. Suppose for now that $i<k$.

Let $A$ be the union of all the elements of the ${(i-1)}^{\text{th}}$ layer of $K_{n_1,\dots,n_k}$, and let $B$ be the intersection of all the elements of the $(i+1)^{\text{th}}$ layer of $K_{n_1,\dots,n_k}$. Since every element of the first $i-1$ layers of $K_{n_1,\dots,n_k}$ is in $\mathcal F_1$, we have that $A\subseteq [d]$. Since every element of the $i^{\text{th}}$ layer is less than every element if the $(i+1)^{\text{th}}$ layer, we must have that every element of the $i^{\text{th}}$ layer is strictly less than $B$ (as $n_i>1)$. By assumption, there is an element of $\mathcal F_2$ in the $i^{\text{th}}$ layer of $K_{n_1,\dots,n_k}$. This means that $n\in B$, which immediately implies that the layers $i+1,\dots,k$ are all entirely contained in $\mathcal F_2$. Consequently, $[n]\setminus[d]\subseteq B$. The sets $A$ and $B$ are represented in the above picture in blue.

Observe now that the $i^{\text{th}}$ level of $K_{n_1,\dots,n_k}$ is an antichain of size $n_i$, in $\mathcal F_1\cup \mathcal F_2$, such that every element of it is strictly greater than $A$ and strictly less than $B$.
\begin{claim1}$|A|\geq\sum_{j=1}^{i-1}(h_j+x_j)$.\end{claim1}
\begin{proof} We will prove something more general. We will show that for every $1\leq r\leq i$, if $C\subseteq [n]$ is a set such that there is an induced copy of $K_{n_1,\dots,n_{r-1}}$ in $\mathcal F_1$ strictly below it, then $|C|\geq \sum_{j=1}^{r-1}(h_j+x_j)$. The proof is by induction on $r$. When $r=1$ there is nothing to show. When $r=2$, the antichain of size $n_1$ must be in $\binom{C}{h_1}$, so $\binom{|C|}{h_1}\geq n_1$. By the definition of $x_1$, this means that $|C|\geq h_1+x_1$.

Suppose now that the result is true for $2\leq r-1<i$, and let $C$ be a set strictly above an induced copy of $K_{n_1,\dots,n_{r-1}}$ in $\mathcal F_1$. Let $C^*$ be the union of the first $r-2$ layers of this induced copy of $K_{n_1,\dots,n_{r-1}}$. By the induction hypothesis, $|C^*|\geq \sum_{j=1}^{r-2}(h_j+x_j)$. 

Let $X_1,\dots,X_{n_{r-1}}$ be the elements of the $(r-1)^{\text{th}}$ layer of this induced copy of $K_{n_1,\dots,n_{r-1}}$. Then $|X_t|>|C^*|$ for all $1\leq t\leq n_{r-1}$ (since every layer is an antichain of size at least 2). This means that the next level of the posets must jump up in $\mathcal F_1$, thus $|X_t|\geq \sum_{j=1}^{r-2}(h_j+x_j)+h_{r-1}\) for all $1\leq t\leq n_{r-1}$.

Observe that if $|X_t|>\sum_{j=1}^{r-2}(h_j+x_j)+h_{r-1}\) for some $t$, then $X_t$ must be in a higher level in $\mathcal F_1$, thus $|X_t|\geq \sum_{j=1}^{r-1}(h_j+x_j)+h_r$. Since $X_t\subset C$, we have $|C|\geq \sum_{j=1}^{r-1}(h_j+x_j)+h_r>\sum_{j=1}^{r-1}(h_j+x_j)$, as claimed.

Assume now that $|X_t|=\sum_{j=1}^{r-2}(h_j+x_j)+h_{r-1}\) for all $t$, and look at the sets $X_1\setminus C^*,\dots,X_{n_{r-1}}\setminus C^*$. They form an antichain of size $n_{r-1}$, and are all subsets of $C\setminus C^*$. Since $|X_t\setminus C^*|=h$ for some $h\leq h_{r-1}$ for all $t$, we get that $\binom{C\setminus C^*}{h}$ contains an antichain of size $n_{r-1}$, thus $|C\setminus C^*|\geq w_{r-1}$. But now we know that $\binom{C\setminus C^*}{h_{r-1}}$ must also contain an antichain of size $n_{r-1}$, therefore, $|C\setminus C^*|\geq h_{r-1}+x_{r-1}$, and so $|C|\geq\sum_{j=1}^{r-1}(h_j+x_j)$.

This finishes the inductive step, and thus the claim.
\end{proof}

By looking at the family $\overline{\mathcal F_2}=\{[n]\setminus A:A\in\mathcal F_2\}$, and applying the same argument as above, we also get that $|[n]\setminus B|\geq \sum_{j=i+1}^{k}(h_{j}+x_j)$. 

Observe now that $i$ cannot be equal to $k$ either, since this would imply that $|A|\geq\sum_{j=1}^{k-1}(h_j+x_j)$, and that the entire $k^{\text{th}}$ level of $K_{n_1,\dots,n_k}$ is in $\mathcal F_2$. Let $B^*$ be the intersection of all the elements of this level. Then we get $[n]\setminus[d]\subset B^*$ and $A\subset B^*$ by the same arguments as above. Moreover, the above result applies and we also have $|[n]\setminus B^*|\geq h_k+x_k$, which implies that $n\geq|\{d+1,\dots,n\}|+|A|+|[n]\setminus B^*|\geq n-d+\sum_{i=1}^{k-1}(h_j+x_j)+h_k+x_k=n+1$, a contradiction.

Let $A_1,\dots,A_{n_i}$ be the elements of the $i^{\text{th}}$ layer of $K_{n_1,\dots,n_k}$. They form an antichain, and each one of them is strictly less than $B$ and strictly greater that $A$.
\begin{claim1} Let $1\leq t\leq n_i$. If $A_t\in \mathcal F_1 $, then $A_t\in \binom{[d]}{\sum_{j=1}^{i-1}(h_j+x_j)+h_i}$.
\end{claim1}
\begin{proof} We know that $A\subseteq A_t\subseteq B\cap [d]$. Therefore, $|A|\leq |A_t|\leq |B\cap [d]|$. Since $B\subseteq [n]\setminus[d]$, we have that $|B\cap [d]| = d - |B^c|\leq\sum_{j=1}^{i}(h_j+x_j)-1 $. Therefore, $\sum_{j=1}^{i-1}(h_j+x_j) \leq |A_t|\leq \sum_{j=1}^{i}(h_{j}+x_j)-1$. Since $A_t$ is in $\mathcal F_1$, the only possible size in can have is $\sum_{j=1}^{i-1}(h_j+x_j) + h_i$, thus $A_t \in \binom{[d]}{\sum_{j=1}^{i-1}(h_j+x_j) + h_i}$, as claimed.
\end{proof}

Similarly, by taking complements, we also have that if $A_t\in\mathcal F_2$ for some $t\in[n_{r-1}]$, then $A_t\in \left\{[n]\setminus D : D \in \binom{[d]}{h_i+\sum_{j=i+1}^{k}(h_{j}+x_{j})}\right\}$. We therefore conclude that our $i^{\text{th}}$ layer, $A_1,\dots,A_{n_i}$, lies in $\binom{[d]}{\sum_{j=1}^{i-1}(h_j+x_j)+h_i}\cup\left\{[n]\setminus D : D \in \binom{[d]}{h_i+\sum_{j=i+1}^{k}(h_{j}+x_{j})}\right\}$.

Let us write $B=\{d+1,\dots,n\}\cup A\cup M$, where $M$ is a subset of $[d]$ and $A\cap M=\emptyset$.
\begin{claim1} Let $t_1$ and $t_2$ be two distinct integers in $[n_i]$. Then $A_{t_1}\cap M$ and $A_{t_2}\cap M$ are incomparable.
\end{claim1}
\begin{proof}
If $A_{t_1}$ and $A_{t_2}$ are both in $\mathcal F_1$, then they are subsets of $\binom{[d]}{\sum_{j=1}^{i-1}(h_j+x_j)+h_i}$, so $A_{t_1}\cap M=A_{t_1}\setminus A$ and $A_{t_2}\cap M=A_{t_2}\setminus A$, thus $A_{t_1}\cap M$ and $A_{t_2}\cap M$ are incomparable as $A_{t_1}$ and $A_{t_2}$ are incomparable and $A$ is a subset of both. Similarly, if they are both in $\mathcal F_2$, then $\{d+1,\dots,n\}\cup A\subset A_{t_1}, A_{t_2}$, thus $A_{t_1}\cap M=A_{t_1}\setminus{(\{d+1,\dots,n\}\cup A)}$ and $A_{t_2}\cap M=A_{t_2}\setminus\{d+1,\dots,n\}\cup A$, which are again incomparable. Hence we may assume that $A_{t_1}\in\binom{[d]}{\sum_{j=1}^{i-1}(h_j+x_j)+h_i}$ and $A_{t_2}\in\left\{[n]\setminus D: D\in\binom{[d]}{h_i+\sum_{j=i+1}^k(h_j+x_j)}\right\}$.

Let us write $A_{t_1}=A\cup T_{1}$ and $A_{t_2}=\{d+1,\dots,n\}\cup A\cup T_{2}$, where $T_1$ and $T_2$ are subsets of $[d]$ disjoint from $A$. Therefore we have $A_{t_1}\cap M=T_1$ and $A_{t_2}\cap M=T_2$. We cannot have $T_1\subseteq T_2$ as this would imply $A_{t_1}\subset A_{t_2}$. If $T_2\subsetneq T_1$, then $|T_2|+1\leq|T_1|$. However, the size of $T_1$ is $|A_{t_1}|-|A|$ and $|T_2|=|A_{t_2}|-n+d-|A|$, and so $\sum_{j=1}^{i-1}(h_j+x_j)+h_i\geq n-h_i-\sum_{j=i+1}^k(h_j+x_j)-n+d+1.$ Rearranging this we get that $x_i\leq h_i$, meaning that $\binom{2h_i}{h_i}\geq n_i$, which consequently implies that $w_i\leq2h_i$, a contradiction. 
\end{proof}

Therefore $A_1\cap M,\dots,A_{n_i}\cap M$ form an antichain. If $A_t\in\binom{[d]}{\sum_{j=1}^{i-1}(x_j+h_j)+h_i}$, we know that $A\subset A_t\subset\{d+1,\dots,n\}\cup A\cup M$, thus $A\subset A_t\subseteq A\cup M$, thus $|A_t\cap M|=|A_t|-|A|=h'\leq h_i$ for all $t\in[n_i]$. Similarly we obtain that if $A_i\in\left\{[n]\setminus D : D \in \binom{[d]}{h_i+\sum_{j=i+1}^{k}(h_{j}+x_{j})}\right\}$, then $A_i\cap M$ is $M\setminus D$ for some $D\in[d]$ of size $h''=|[n]\setminus A_t|-|[n]\setminus B|\leq h_i$. Therefore, this antichain lives inside $\binom{M}{h'}\cup \left\{M\setminus D : D \in \binom{M}{h''}\right\}$ for some $h', h''\leq h_i$.

Let $a_1$ be the number of sets of this antichain that are in $\binom{M}{h'}$, and $a_2$ the number of sets of the antichain that are in $\{M\setminus D:D\in\binom{M}{h''}\}$. Then the LYM inequality tells us that $\frac{a_1}{\binom{|M|}{h'}}+\frac{a_2}{\binom{|M|}{h''}}\leq 1$. Assume without loss of generality that $\binom{|M|}{h''}\leq\binom{|M|}{h'}$, thus $a_1+a_2=n_i\leq\binom{|M|}{h'}$. Since $|M|=d-|A|-|[n]\setminus B|\leq h_i+x_i-1$, we have that $n_i\leq\binom{x_i+h_i-1}{h'}$. By the definition of $w_i$, we get that $w_i\leq x_i+h_i-1$. But $2h_i<w_i$, and so $2h_i<x_i+h_i-1$, which implies that $h_i\leq\lfloor\frac{x_i+h_i-1}{2}\rfloor$. Since $h'\leq h_i$ we get that $n_i\leq\binom{x_i+h_i-1}{h_i}$, which contradicts the definition of $x_i$. 

Therefore such a copy cannot exist, i.e. $\mathcal F_1\cup\mathcal F_2$ is $K_{n_1,\dots,n_k}$-free.
\end{proof}
\end{proof}
\begin{corollary}
Let $k\geq2$ and $n_1, n_2,\dots,n_k$ be natural numbers with the property that for no $i\in[k]$ we have $n_i=n_{i+1}=1$. Then $\text{sat}^*(n,K_{n_1,\dots,n_k})=O(n)$.    
\end{corollary}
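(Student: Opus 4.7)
The plan is to reduce to the case handled by Theorem~\ref{k-partite theorem}, where all layer sizes are at least $2$. First I would iteratively strip every interior size-$1$ layer via Proposition~\ref{poset-point-poset}. Concretely, if $n_i = 1$ with $1 < i < k$, then the no-consecutive-$1$'s hypothesis forces $n_{i-1}, n_{i+1} \geq 2$, so the posets $\mathcal P_1 = K_{n_1,\dots,n_{i-1}}$ and $\mathcal P_2 = K_{n_{i+1},\dots,n_k}$ are both non-empty, $\mathcal P_1$ has no greatest element, and $\mathcal P_2$ has no least element. Since $K_{n_1,\dots,n_k} = \mathcal P_2 * \bullet * \mathcal P_1$, Proposition~\ref{poset-point-poset} yields
\[
\text{sat}^*(n, K_{n_1,\dots,n_k}) \leq \text{sat}^*(n, K_{n_1,\dots,n_{i-1},n_{i+1},\dots,n_k}).
\]
The shorter sequence still satisfies the no-consecutive-$1$'s property and has one fewer $1$, so iterating leaves a sequence whose size-$1$ layers, if any, occur only at the endpoints.

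If no size-$1$ layers remain, Theorem~\ref{k-partite theorem} finishes. Otherwise, after possibly passing to the dual, suppose $n_1 = 1$, and allow also $n_k = 1$ (which forces $k \geq 3$). Let $\mathcal G$ be a linear-size saturated family for the \emph{interior} poset obtained by deleting the boundary size-$1$ layers; this interior has all layers of size at least $2$, so Theorem~\ref{k-partite theorem} supplies such a $\mathcal G$ when the interior has at least two layers, with the single-antichain case handled by the known linear bound on $\text{sat}^*(n, \mathcal{A}_m)$. I would then take $\mathcal F = \mathcal G \cup \{\emptyset\}$, also adding $[n]$ when $n_k = 1$ as well.

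To verify saturation, the key observation is that $\emptyset$ and $[n]$ are comparable with every subset, and so neither can occupy a layer of size $\geq 2$ in an induced copy of the interior poset. Consequently $\mathcal F$ still contains no induced copy of the interior poset, and a fortiori none of $K_{n_1,\dots,n_k}$. For any $A \notin \mathcal F$ we have $A \notin \mathcal G$, so $\mathcal G \cup \{A\}$ contains an induced copy $C$ of the interior poset using $A$ and avoiding $\emptyset$ and $[n]$; prepending $\emptyset$ to $C$ (and appending $[n]$ if $n_k = 1$) produces the required induced copy of $K_{n_1,\dots,n_k}$ in $\mathcal F \cup \{A\}$. Since $|\mathcal F| \leq |\mathcal G| + 2 = O(n)$, the corollary will follow.

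The main obstacle is precisely these boundary size-$1$ layers, since Proposition~\ref{poset-point-poset} only inserts a middle point and cannot excise an endpoint. The $\emptyset/[n]$ device circumvents this, and it works because, once all interior $1$'s are removed, the interior poset has every layer of size at least $2$, so its antichain layers simply cannot swallow the universally comparable sets $\emptyset$ or $[n]$.
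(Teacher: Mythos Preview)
Your argument is correct. For interior size-$1$ layers you do exactly what the paper does: use Proposition~\ref{poset-point-poset} to pass between $K_{n_1,\dots,n_k}$ and the poset with that layer deleted, reducing to Theorem~\ref{k-partite theorem}. The only difference is phrasing (you ``strip'' $1$'s from the long poset, the paper ``inserts'' $1$'s into the short one), but the inequality invoked is identical.

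Where you go beyond the paper's sketch is in the treatment of boundary $1$'s. Proposition~\ref{poset-point-poset} requires both $\mathcal P_1$ and $\mathcal P_2$ to be non-empty, so it cannot be used to attach or detach a size-$1$ layer at the very top or bottom; the paper's proof simply does not spell this case out. Your $\emptyset/[n]$ device handles it cleanly, and your justification is sound: since every layer of the interior poset has size at least $2$, each element of an induced copy is incomparable to something, so $\emptyset$ and $[n]$ can never participate in such a copy, and therefore adjoining them neither creates a forbidden copy nor obstructs the extension of one. Noting that the single-antichain interior case (when $k=3$ with $n_1=n_3=1$) falls outside Theorem~\ref{k-partite theorem} and citing the known linear bound for $\mathcal A_m$ is also the right patch. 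In short, your proof is a more explicit version of the paper's, filling in the endpoint case that the paper leaves implicit.
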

\begin{proof} The proof is a simple application of Theorem~\ref{k-partite theorem} and Proposition~\ref{poset-point-poset}. More precisely, starting with the layers that have size at least 2, we repeatedly apply the fact that $\text{sat}^*(n,\mathcal{P}_2*\mathcal{P}_1)\geq\text{sat}^*(n,\mathcal P_2*\bullet*\mathcal P_1)$ whenever $\mathcal P_2$ does not have a unique minimal element and $\mathcal P_1$ does not have a unique maximal element, inserting at each step the layers of size 1.
\end{proof}
\section{Poset percolating families}
In this section we discuss a weak notion of saturation for posets. Let $\mathcal P$ be a finite poset and $n$ a natural number. We say that a family $\mathcal F\subseteq\mathcal P([n])$ \textit{is percolating for $\mathcal P$} if there exists an ordering $A_1,A_2,\dots A_N$ of $\mathcal P([n])\setminus\mathcal F$, such that, for all $i\in[N]$, $\mathcal F\cup\{A_1,\dots A_i\}$ contains an induced copy of $\mathcal P$ in which $A_i$ is an element. This is equivalent to saying that, starting with $\mathcal F$, we can exhaust $\mathcal P([n])$ by adding at each step all the elements that form a new induced copy of $\mathcal P$.

We define \textit{the percolation number of $\mathcal P$}, denoted by $\text{sat}_p(n,\mathcal P)$, to be the minimum size of a $\mathcal P$-percolating family. Since we are looking for a minimal $\mathcal P$-percolating family, it must be $\mathcal P$-free (otherwise we exclude one element from a copy of $\mathcal P$ and list it before everything else, hence obtaining a strictly smaller $\mathcal P$-percolating family). It is clear that any such family must have at least $|\mathcal P|-1$ sets, otherwise there is no copy of $\mathcal P$ at step 1. We show that indeed, for $n$ large enough, the percolation number for any poset is $|\mathcal P|-1$ if $\mathcal P$ has both a unique minimal and a unique maximal element, $|\mathcal P|+1$ if $\mathcal P$ does not have a unique maximal or a unique minimal element, and $|\mathcal P|$ otherwise. Note that trivially this is the smallest it can be in all the cases since, if $\mathcal P$ does not have a unique minimal (maximal) element, then $\emptyset$ ($[n]$) must be in the family as they are never part of an induced copy of $\mathcal P$.

\begin{theorem} Let $\mathcal P$ be a finite poset consisting of $p$ elements, and $n\geq 3p-1$ a positive integer. Then $\text{sat}_p(n,\mathcal P)=p+1$ if $\mathcal P$ does not have a unique maximal or a unique minimal element, $\text{sat}_p(n,\mathcal P)=p-1$ if $\mathcal P$ has both a unique maximal and a unique minimal element, and $\text{sat}_p(n,\mathcal P)=p$ otherwise.
\label{percolating}    
\end{theorem}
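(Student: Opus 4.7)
The plan is to prove the lower and upper bounds separately, the lower bound being routine and the upper bound requiring an explicit construction plus a careful percolation ordering.

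For the lower bound, I would use two observations already made in the section's preamble: $[n]$ is comparable with every subset of $[n]$, so it can appear in an induced copy of $\mathcal{P}$ only in the role of a unique maximum (and dually for $\emptyset$); and $\mathcal{F}\cup\{A_1\}$ must contain an induced copy at step $1$, so $|\mathcal{F}|\geq p-1$. Combining, if $\mathcal{P}$ has no unique maximum then $[n]\in\mathcal{F}$ but contributes nothing to any copy, and similarly for $\emptyset$. This forces $|\mathcal{F}|\geq p+1$ when neither extreme is unique, $|\mathcal{F}|\geq p$ when exactly one is, and $|\mathcal{F}|\geq p-1$ when both are.

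For the upper bound, I would fix a linear extension $q_1,\dots,q_p$ of $\mathcal{P}$ and use the down-set embedding $\phi:\mathcal{P}\to\mathcal{P}([p])$ given by $\phi(q_i)=\{j\leq i:q_j\preceq q_i\}$, an induced poset embedding. The starting family is then defined case by case: $\mathcal{F}=\phi(\mathcal{P}\setminus\{M\})$ when both extremes are unique (size $p-1$), $\mathcal{F}=\{\emptyset\}\cup\phi(\mathcal{P}\setminus\{M\})$ when only the maximum is unique (size $p$), the dual up-set construction when only the minimum is unique (size $p$), and $\mathcal{F}=\{\emptyset,[n]\}\cup\phi(\mathcal{P}\setminus\{q_p\})$ otherwise (size $p+1$). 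Apart from $[n]$ where required, every set lies in $\mathcal{P}([p])$, matching the paper's claim that $\mathcal{F}$ can be taken fixed for all sufficiently large $n$.

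To verify percolation, I would exhibit an explicit ordering of $\pn\setminus\mathcal{F}$. The opening moves are forced: add $[n]$ where absent, producing the copy $\phi(\mathcal{P}\setminus\{M\})\cup\{[n]\}$ via $M\mapsto[n]$; add $\emptyset$ where absent, producing $\{\emptyset\}\cup\phi(\mathcal{P}\setminus\{m,M\})\cup\{[n]\}$; in the no-unique-extreme case, add instead $A_1=\phi(q_p)$ to yield the full copy $\phi(\mathcal{P})$. Next I would add the prefix sets $[p],[p+1],\dots,[n-1]$ in order, each completing a copy of $\mathcal{P}$ in which it plays the role of $M$; these act as the essential ``bridges'', serving later as supersets for the small sets we need to add. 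Finally, I would add the remaining subsets of $[n]$ in increasing order of cardinality; for each such $S$, I produce an induced copy of $\mathcal{P}$ in which $S$ plays some $q\in\mathcal{P}$, with subsets of $S$ (already present by the size order) below, $\phi$-images or prefix sets above, and incomparable partners drawn from $\mathcal{F}$ or from free coordinates in $\{p+1,\dots,n\}$. The main technical obstacle is this last step: for an arbitrary $S$ I must choose the role $q$ and locate all the partners in the current family. The hypothesis $n\geq 3p-1$ enters precisely here, providing enough room to realise every needed partner: roughly $p$ coordinates for the image of $\phi$, another $p-1$ to construct sets incomparable with an arbitrary $S$, and about $p$ more to extend partners above or below $S$. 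The verification splits according to how $S\cap[p]$ compares with each $\phi(q)$, and the four cases of the theorem should run in parallel; the delicate part will be the bookkeeping needed to guarantee that every bridge, subset, and incomparable partner on which the copy for $S$ relies has already been added by the time $S$ is added.
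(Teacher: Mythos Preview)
Your lower bound and your choice of starting family match the paper's exactly (the paper calls your $\phi$ the ``canonical copy''). The percolation ordering, however, is where the two diverge, and your plan has a real gap.

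Your scheme hinges on the prefix sets $[p],[p+1],\dots,[n-1]$, each ``playing the role of $M$''. But two of the four cases of the theorem have no unique maximum $M$, so this step is undefined there. Concretely, take $\mathcal P$ to be the antichain of size $p\ge 2$: after your opening move the current family is $\{\emptyset,[n],\{1\},\dots,\{p\}\}$, and $[p]$ is comparable with every one of these sets, so no induced copy of $\mathcal P$ containing $[p]$ exists and the prefix step stalls immediately. Even in the cases where a unique maximum does exist, the prefixes form a chain, so in your final ``increasing cardinality'' sweep they cannot serve as the up-set of a role $q$ whose strict up-set in $\mathcal P$ is not totally ordered; you would then need other supersets of $S$, and the size ordering gives no guarantee that these are already present. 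Your own remark that ``the delicate part will be the bookkeeping'' is where the argument actually lives, and the antichain example shows that no amount of bookkeeping rescues the prefix idea in general.

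The paper's ordering avoids this entirely. Instead of prefixes, it first \emph{translates} the canonical copy: for every $A\subseteq[n]\setminus[p]$ and every $i$ it adds $A\cup A_i$, proceeding from maximal elements of the canonical copy down to minimal ones, so that at each step the new set slots into a shifted copy of $\mathcal P$. This produces, for every such $A$, a full induced copy $\{A\cup A_1,\dots,A\cup A_p\}$ sitting above $A$; these translates then let one absorb all of $\mathcal P([n]\setminus[p])$, then (using a second canonical copy on $\{p+1,\dots,2p\}$) all of $\mathcal P([p])$, and finally the mixed sets by a short case analysis on $|A\cap([n]\setminus[p])|$. The key device you are missing is this top-down translation step, which manufactures many parallel copies of $\mathcal P$ rather than a single chain of bridges.
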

\begin{proof}
Let $\mathcal P=\{p_1,p_2,\dots, p_p\}$ with its partial order given by $\leq'$. We say that $X_1,\dots, X_p$ is the \textit{canonical copy of $\mathcal P$ on ground set $\{x_1<\dots<x_p\}$} if $x_i\in X_j\Leftrightarrow p_i\leq' p_j$. It is straightforward to check that this is indeed an induce copy of $\mathcal P$. 

Let $A_1,\dots,A_p$ be the canonical copy of $\mathcal P$ on $[p]$. If $\mathcal P$ does not contain a unique maximal or a unique minimal element, then we will take  $\mathcal F$ to be $\{A_2,\dots,A_p, \emptyset, [n]\}$, if $\mathcal P$ has both a unique maximal and a unique minimal element, then we will take $\mathcal F$ to be $\{A_2,\dots,A_p\}$, and if $\mathcal P$ has a unique minimal (maximal) element, but not a unique maximal (minimal) element, we take $\{A_2,\dots,A_p\}$ together with $[n]$ ($\emptyset$). We observe that, in order to show that $\mathcal F$ is percolating for $\mathcal P$, regardless of the structure of $\mathcal P$, it is enough to show that, starting with $\{A_1,A_2,\dots, A_p\}$, we can add one by one all sets $A\in\mathcal P([n])\setminus\{A_1,\dots,A_p,\emptyset, [n]\}$ in such a way that we create a new copy of $\mathcal P$ at every step.

Now let $A\subseteq[n]\setminus[p]$. Then, if we replace $A_j$ with $A\cup A_j$, where $A_j$ is a maximal element of this copy of $\mathcal P$, we still have an induced copy of $\mathcal P$. Thus, going in levels in the canonical copy of $\mathcal P$ we started with, from the maximal elements to the minimal elements, we can add to our family every set of the form $A\cup A_i$ for all $i\in[p]$ and all $A\subseteq [n]\setminus [p]$. Call this new family $\mathcal F_1$.

Let $A_k$ be a minimal element of this copy of $\mathcal P$. By construction $A_k=\{k\}$. We notice that the function $f:\{A_1,\dots,A_p\}\rightarrow\mathcal F_1\cup\{A\}$ given by $f(A_k)=A$, $f(A_i)=A\cup A_i$ if $k\in A_i$ and $i\neq k$, and $f(A_i)=A_i$ otherwise, is an order-preserving injection. This means that we can add any such $A$ to $\mathcal F_1$. Let this new family be $\mathcal F_2$. Since $\mathcal F_2$ contains $\mathcal P(\{p+1, p+2,\dots,n\})$, looking at the canonical copy of $\mathcal P$ on ground set $\{p+1,\dots,2p\}$ and running the same argument as above, we see that we can also add $\mathcal P([p])$ to $\mathcal F_2$. Call this new family $\mathcal F_3$.

We are left to show that we can add to $\mathcal F_3$ all sets $B\notin\{\emptyset,[n]\}$ of the form $B=A\cup X$, where $A\subseteq\{p+1,\dots,n\}$, $X\subseteq[p]$ and $A$ and $X$ are non-empty.

If $|A|\leq p-1$, then let $\{a_1,a_2,\dots,a_p\}\subseteq\{p+1,\dots,n\}\setminus A$, and look at the canonical copy of $\mathcal P$ on ground set $\{a_1,\dots,a_p\}$. As noted before, at least one set is a singleton (any minimal set), say $\{a_t\}$. We now replace, in all sets of this copy, the element $a_t$ with the set $A$. This gives us a copy of $\mathcal P$ in $\mathcal F_3$ in which $A$ is one of the minimal elements. Finally, by going down in layers from the maximal elements to the minimal elements of this copy, and adding $X$ to every set, we conclude that we can eventually add $A\cup X$ too.

If $|A|\geq p$ and $A\neq\{p+1,\dots,n\}$, then let $l\in\{p+1,\dots,n\}\setminus A$. Let also $\{b_1,b_2,\dots b_{p-1}\}\subset A$, and $\overline{\mathcal{P}}$ be the poset obtained from $\mathcal P$ by reversing all relations. Consider the canonical copy of $\overline{\mathcal P}$ with ground set $S=\{l,b_1,b_2,\dots b_{p-1}\}$. By taking the complements in $S$ of the sets that make up this copy, we obtain a copy of $\mathcal P$ where all the maximal elements are complements of some singletons in $S$. After potentially reordering, we may assume that one such set is $\{b_1,b_2,\dots,b_{p-1}\}$, which can now be replaced by $A\cup X$. Thus we can add $A\cup X$ to $\mathcal F_3$.

Finally, let $A=\{p+1,\dots,n\}$ and $X\neq [p]$. By the above we may assume that any set that is not of this form has been added to the family. Let $k\in[p]\setminus X$ and, similarly as before, construct a copy of $\mathcal P$ on ground set $T=\{k,p+1,\dots,2p-1\}$ whose maximal elements are all complements of singletons in $T$. After reordering, if necessary, we may assume that one such maximal element is $\{p+1,\dots,2p-1\}$, which can be replaced by $A\cup X$, thus finishing the proof. 
\end{proof}
\section{Conclusions and further work}
The gluing operation seems to be playing a significant role in our understanding of posets at their saturation number -- in fact it is the only know operation on the set of finite posets that follows any type of monotone property (in the sense that if a poset is an induced copy of another poset, then the saturation number of one controls the saturation number of the other).

There is, however, one condition that we often have to impose on the posets in order to make our proofs work, namely that they do not have a unique minimal or a unique maximal element. This suggests to us that the gluing operation and Conjecture~\ref{maxelement} are tightly related. Another consequence of this restriction is the fact that we have been unable to obtain linear upper bounds for complete posets that have at least two consecutive layers of size 1. For example, what happens to the saturation number if we take two complete posets, both with all layers non-trivial, and glue them together via a long chain? In other words, can $\text{sat}^*(n,K_{n_1,\dots, n_k}*C_M*K_{n'_1,\dots,n'_{k'}})$ be of superlinear growth for some very large $M$? We believe that this is not true. In fact, we conjecture the following.
\begin{conjecture}
Let $n_1,\dots,n_k$ be positive integers such that at least one of them is not equal to 1. Then $\text{sat}^*(n, K_{n_1,\dots,n_k})=\Theta(n)$.
\end{conjecture}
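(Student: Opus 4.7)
The plan is to prove the conjecture in two parts: an upper bound $O(n)$, extending Theorem~\ref{k-partite theorem} and its corollary, and a lower bound $\Omega(n)$, which appears to require a genuinely new argument.

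For the upper bound, partition the layer sequence $(n_1,\dots,n_k)$ into maximal alternating blocks of ``wide'' layers (those with $n_i\geq 2$) and ``trivial'' layers (those with $n_i=1$). Proposition~\ref{poset-point-poset} reduces the problem to the case where every trivial block has length at least $2$, since isolated trivial layers can be inserted between wide blocks at no asymptotic cost. For each wide block, construct a hypercube family as in Theorem~\ref{k-partite theorem} on a dedicated range of ground-set coordinates. For each trivial block of length $m\geq 2$, insert a chain of $m$ sets whose sizes strictly interpolate between the top of the previous wide block and the bottom of the next. With disjoint coordinate supports for the wide blocks, the size-counting argument from the second proposition in the proof of Theorem~\ref{k-partite theorem} should go through, showing that any putative induced copy of $K_{n_1,\dots,n_k}$ would force contradictory dimension bounds across the block boundaries. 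Saturation would then follow from a two-claim argument analogous to that of Theorem~\ref{bounded}: any unadded set fits into an induced copy by being matched to the appropriate block.

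The lower bound $\Omega(n)$ is the main obstacle. Liu's extension of the ``legs'' result \cite{liu2025induced} already yields $\Omega(n)$ whenever $n_1=2$ or $n_k=2$. For the remaining cases, the plan is to exploit that at any wide layer $L_i$ there exist two incomparable elements $l_1,l_2$ with every element outside $L_i$ comparable to both---a Liu-legs-like structure internal to the poset---and to adapt the separation-of-ground-set argument of \cite{liu2025induced} to this setting. The hardest case will be $K_{1,\dots,1,m,1,\dots,1}$ with $m\geq 2$, where chain buffers on either side of the interior antichain obstruct direct encoding of ground-set elements: a saturated family must simultaneously respect the pure-chain saturation constraints on both sides, and the only ``wiggle room'' available for encoding $n$ positions lies in the interior antichain itself. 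Resolving this will likely require a refined argument that tracks, for each $j\in[n]$, which element of the interior antichain the coordinate $j$ is ``assigned'' to in the saturated family, perhaps through a double counting or entropy-style calculation. This is the genuinely new ingredient we expect to be required beyond the techniques developed in the present paper.
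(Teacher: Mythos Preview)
The statement you are attempting is a \emph{conjecture} in the paper; the authors do not prove it, and explicitly flag both the upper bound (for posets with two or more consecutive trivial layers) and the linear lower bound as open. There is therefore no proof in the paper to compare your attempt against. What you have written is a research plan, and you yourself identify the principal gap. Let me name the obstacles more precisely.

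\textbf{Upper bound.} Your plan is fine for isolated trivial layers: that is exactly the Corollary to Theorem~\ref{k-partite theorem}. But for a trivial block of length $m\geq 2$ you propose to ``insert a chain of $m$ sets whose sizes strictly interpolate'' and assert that ``the size-counting argument\dots should go through.'' This is precisely the case the paper singles out as unresolved in the Conclusions (e.g.\ $K_{n_1,\dots,n_k}*C_M*K_{n'_1,\dots,n'_{k'}}$). The construction in Theorem~\ref{k-partite theorem} works because each layer has $n_i\geq 2$: the parameters $h_i,x_i$ force any putative $i$-th layer of a copy to jump between the two hypercubes via an antichain-size obstruction. A long internal chain provides no such obstruction, so there is no evident reason why a copy cannot thread through your interpolating chain, and the ``contradictory dimension bounds across block boundaries'' you invoke are not supplied by anything in your sketch.

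\textbf{Lower bound.} Your ``hardest case'' $K_{1,\dots,1,m,1,\dots,1}$ includes, at its simplest, the diamond $K_{1,2,1}$. The current best lower bound for the diamond is $(4-o(1))\sqrt{n}$; improving it to linear is a well-known open problem that the paper itself highlights. Liu's legs argument works because the legs are \emph{extremal} elements of the poset, which forces any saturated family to encode ground-set elements near $\emptyset$ (or $[n]$). For an \emph{internal} antichain buffered by chains on both sides there is no known mechanism forcing separation of the ground set, let alone linearly many sets; your suggested ``assignment'' or entropy argument is a hope, not a method. Resolving even the diamond case here would be a significant advance independent of this paper.

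In short: your outline correctly identifies the two subproblems and the known partial results, but both directions contain genuinely open problems and neither of your proposed extensions constitutes a proof.
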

Returning to the gluing operation, note that, in the unbounded case, we have managed to show that the saturation number of a poset obtained via this operation is unbounded if at least one of the posets has the UCTP. We in fact believe that the following stronger statement should hold.
\begin{conjecture}
Let $\mathcal P_1$ and $\mathcal P_2$ be two finite posets. Then $\text{sat}^*(n,\mathcal P_2*\mathcal P_1)$ is unbounded if and only if at least one of $\mathcal P_1$ or $\mathcal P_2$ has unbounded saturation number.   
\end{conjecture}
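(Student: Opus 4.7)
My plan is to split the biconditional into its two implications and address each separately, since each extends a different result of the paper. The direction ``$\Rightarrow$'' --- if $\text{sat}^*(n,\mathcal{P}_2 * \mathcal{P}_1)$ is unbounded then at least one of $\mathcal{P}_1, \mathcal{P}_2$ has unbounded saturation --- is equivalent by contrapositive to saying that bounded saturation is preserved under gluing, which is exactly Theorem~\ref{bounded} without the hypotheses that $\mathcal{P}_1$ has no unique maximum and $\mathcal{P}_2$ has no unique minimum. The direction ``$\Leftarrow$'' extends Theorem~\ref{glueingthm}, weakening its UCTP sufficient condition to the assumption of unbounded saturation alone.

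For ``$\Rightarrow$'' I would reduce the unique-extremum cases to Theorem~\ref{bounded}. Suppose $\mathcal{P}_1$ has a unique maximal element $p$; the dual case is symmetric. Then $\mathcal{P}_2 * \mathcal{P}_1 = \mathcal{P}_2 * \bullet * (\mathcal{P}_1 \setminus \{p\})$, and as observed in the proof of Proposition~\ref{glueinglemma}, $\mathcal{P}_1 \setminus \{p\}$ automatically has no unique maximum. If I can show that $\mathcal{P}_1 \setminus \{p\}$ and $\mathcal{P}_2 * \bullet$ both have bounded saturation, Theorem~\ref{bounded} finishes the job. But ``$\mathcal{P}_1$ bounded $\Rightarrow \mathcal{P}_1 \setminus \{p\}$ bounded'' and (after reversing) ``$\mathcal{P}_2$ bounded $\Rightarrow \mathcal{P}_2 * \bullet$ bounded'' are precisely instances of Conjecture~\ref{maxelement}, so this direction reduces cleanly to that conjecture.

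For ``$\Leftarrow$'' I would adapt the separation argument of Proposition~\ref{glueinglemma}, using the characterization recalled in the introduction that a poset has unbounded saturation if and only if every saturated family on a sufficiently large ground set separates the ground set. Assume $\mathcal{P}_1$ has unbounded saturation and suppose for contradiction that $\mathcal{F}$ is a $\mathcal{P}_2 * \mathcal{P}_1$-saturated family on $[n]$ failing to separate a pair $\{i,j\}$. By Proposition~\ref{poset-point-poset} we may assume $\mathcal{F}$ is even $\mathcal{P}_2 * \bullet * \mathcal{P}_1$-saturated. Mimicking the minimality-of-$|S|$ trick in Proposition~\ref{glueinglemma}, I would try to extract from the sets of $\mathcal{F}$ lying below a fixed copy of $\mathcal{P}_2 * \bullet$ (all of which must contain $\{i,j\}$) a $\mathcal{P}_1$-saturated family on a ground set of size $\Theta(n)$ that still does not separate $i$ and $j$, contradicting the unboundedness of $\text{sat}^*(n,\mathcal{P}_1)$. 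The hope is that saturation of $\mathcal{F}$ with respect to $\mathcal{P}_2 * \bullet * \mathcal{P}_1$ forces this bottom substructure to be saturated for $\mathcal{P}_1$ after projecting the redundant coordinate pair out.

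The direction ``$\Leftarrow$'' is where I expect the real difficulty to lie. The proof of Proposition~\ref{glueinglemma} uses the UCTP in an essential way: it produces a twin set $A$ which yields a contradiction in both of its geometric positions relative to $S$. Without the UCTP there is no local replacement mechanism, and the only non-UCTP poset known to have unbounded saturation is $2C_2$, giving almost no empirical basis for the general pattern. A full proof will therefore likely demand both a resolution of Conjecture~\ref{maxelement} (for the ``$\Rightarrow$'' half) and a new structural tool --- perhaps a ``separation-transfer'' principle lifting non-separating saturated families for $\mathcal{P}_1$ or $\mathcal{P}_2$ to non-separating ones for $\mathcal{P}_2 * \mathcal{P}_1$ --- operating without reference to any twin element.
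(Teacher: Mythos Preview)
The statement you are addressing is a \emph{conjecture} in the paper, not a theorem: the paper offers no proof, so there is nothing to compare your argument against. What the paper does say (in the concluding section) is that Theorem~\ref{bounded} in fact holds for arbitrary finite $\mathcal{P}_1,\mathcal{P}_2$, with the extra argument omitted as ``tedious and not particularly enlightening.'' That already yields the direction ``$\Rightarrow$'' outright, so your detour through Conjecture~\ref{maxelement} for that half is unnecessary. Your reduction there also contains a slip: the claim that $\mathcal{P}_1\setminus\{p\}$ ``automatically has no unique maximum'' is lifted from the proof of Proposition~\ref{glueinglemma}, but that argument uses the UCTP in an essential way (the putative unique maximum $q$ would lack a twin). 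Without UCTP the claim fails---take $\mathcal{P}_1$ a chain---so your decomposition $\mathcal{P}_2*\mathcal{P}_1=(\mathcal{P}_2*\bullet)*(\mathcal{P}_1\setminus\{p\})$ need not land you in the hypotheses of Theorem~\ref{bounded} after one step.

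For ``$\Leftarrow$'' you correctly identify the genuine obstacle: this is the open direction, and the paper explicitly flags the full conjecture as a strengthening of Conjecture~\ref{maxelement}. Your proposed ``separation-transfer'' idea---restricting a non-separating $\mathcal{P}_2*\bullet*\mathcal{P}_1$-saturated family to the sets below a minimal copy of $\mathcal{P}_2*\bullet$ and hoping the result is $\mathcal{P}_1$-saturated on a linear-size ground set---is a reasonable heuristic, but as stated it is not an argument: nothing forces that down-set to be $\mathcal{P}_1$-free, nor to be saturated for $\mathcal{P}_1$, and the ``projecting the redundant coordinate pair out'' step is undefined. The UCTP in Proposition~\ref{glueinglemma} is doing real work that your sketch does not replace, and you rightly acknowledge this. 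In short: the ``$\Rightarrow$'' half is settled by the paper's (unstated) extension of Theorem~\ref{bounded}, while ``$\Leftarrow$'' remains open and your outline does not close it.
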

This is a strengthening of Conjecture~\ref{maxelement}. We would also like to mention that the proof of Theorem~\ref{bounded} relies heavily on the existence of bounded saturated families that contain a certain hypercube. For general posets with bounded saturation number these families may not exist. Nevertheless, we think that the result should hold in full generality.

\begin{conjecture}
Let $\mathcal P_1$ and $\mathcal P_2$ be two finite posets with bounded saturation number. Then $\mathcal P_2*\mathcal P_1$ also has bounded saturation number.
\end{conjecture}

Finally, we mention a surprising observation, that ties \textit{all} complete multipartite posets with no trivial layers and at least one layer of size 2, to the famous diamond poset. By applying Proposition \ref{propositionmax} at most twice, and potentially using the fact that $n+1=\text{sat}^*(n,\Lambda_2)=\text{sat}^*(n, \mathcal A_2)=\text{sat}^*(n,\mathcal V_2)\geq\text{sat}^*(n,\mathcal D_2)$, we get that $\text{sat}^*(n,K_{n_1,\dots,n_k})\geq\text{sat}^*(n,\mathcal D_2)$, as long as $n_i=2$ for some $i\in[k]$ and $n_t\geq2$ for all $t\in[k]$. This says that establishing a linear lower bound for the diamond would immediately imply that the saturation number for all these complete posets is also linear, showing once again how crucial in the poset saturation area the diamond really is. 

\vspace{2em}
\textbf{Acknowledgment.} The first author would like to thank the Institute for Advanced Study, Princeton, where most of this paper was written.

\vspace{1em}
\textbf{Note added in proof.} We want to thank Igor Pak who brought to our attention the fact that the `gluing' operation considered in this paper appears in literature as the \textit{linear sum} of posets.
\bibliographystyle{amsplain}
\bibliography{references}
\Addresses
\end{document}